\newtheorem{theorem}{Theorem}[section]
\newtheorem{notation}[theorem]{Notation}
\newtheorem{corollary}[theorem]{Corollary}
\newtheorem{lemma}[theorem]{Lemma}
\newtheorem{proposition}[theorem]{Proposition}
\theoremstyle{definition}
\newtheorem{definition}[theorem]{Definition}
\newtheorem{remark}[theorem]{Remark}
\newtheorem{example}[theorem]{Example} 
\newtheorem{question}[theorem]{Question}
\newcommand{\supp}{\operatorname{supp}}
\newcommand{\F}{\mathbb{F}}
\newcommand{\T}{\mathcal{T}}
\begin{document}

\begin{frontmatter}



\title{Information hiding using matroid theory}

\author[1]{Ragnar Freij-Hollanti\fnref{a}}
\author[2]{Olga Kuznetsova\fnref{b}}
\address[a]{Department of Mathematics and Systems Analysis, Aalto University, ragnar.freij@aalto.fi}
\address[b]{Department of Mathematics and Systems Analysis, Aalto University, olga.kuznetsova@aalto.fi}


\begin{abstract}
Inspired by problems in Private Information Retrieval, we consider the setting where two users need to establish a communication protocol to transmit a secret without revealing it to external observers. This is a question of how large a linear code can be, when it is required to agree with a prescribed code on a collection of coordinate sets. We show how the efficiency of such a protocol is determined by the derived matroid of the underlying linear communication code. Furthermore, we provide several sufficient combinatorial conditions for when no secret transmission is possible. 

\end{abstract}

\begin{keyword}
linear code \sep abstract simplicial complex \sep derived matroid \sep private information retrieval

\end{keyword}

\end{frontmatter}

\section{Introduction}
\label{sec1: intro}
The constuctions in this paper were originally motivated by problems in Private Information Retreival (PIR)~\cite{TGKFHR}. Rather than describing the specific setting that motivated our problem, we will give a generalized communication setup.

Consider a pair $(Q,Q')$ of linear codes of length~$n$ over a field $\mathbb{F}$ such that $Q\subseteq Q'\subseteq\F^n$.
Two parties, Alice and Bob, have agreed to use the pair $(Q,Q')$ for communication. Their communication happens in two different states with high-entropy and low-entropy messages, respectively. The reason for doing so might, for example, be that the channel quality varies over time, or (as in the case of PIR) that different information is requested to avoid surveillance. 
For simplicity, we may assume that one message between Alice and Bob corresponds to one codeword $x\in \F^n$, and that each symbol of $x$ is sent over a different link.  Depending on the application, these links may be servers, routers, or even wires. Without loss of generality, we label these links by the integers $1,\dots, n$. In the high-entropy communication state, the message is a codeword drawn from $Q'$, and in the low-entropy state it is drawn from~$Q$.

A collection $I$ of observers each control a different subset $T_\alpha\subseteq [n]$, $\alpha\in I$ of the links. These subsets need not be disjoint. Depending on the application, these observers can also be spies or computational clusters. The pair $(Q,Q')$ should now be designed so that none of the observers can tell  whether a message is sent in the high-entropy or low-entropy communication state. In other words, we insist that the projections $Q|_{T_\alpha}$ and $Q'|_{T_\alpha}$ are the same for every $\alpha\in I$. 

\begin{definition}
We call the collection $\T=\{T_\alpha : \alpha\in I\}$ a {\em collusion pattern} and its elements $T$ are \emph{colluding sets}.
\end{definition}

 Notice that, if $Q|_T=Q'|_T$, then it immediately follows that $Q|_{T'}=Q'|_{T'}$ for all $T'\subseteq T$, as $Q|_{T'}$ is a projection of $Q|_T$. We can thus assume without loss of generality that  collusion patterns are closed under inclusion:
\[T \in \mathcal{T} \quad \implies T' \in \mathcal{T} \quad \forall   T' \subset T.\] 
In other words, $\mathcal{T}$ is an abstract simplicial complex and we write 
$\mathcal{T}=\langle T_1,T_2, \cdots, T_r\rangle$, where $T_\alpha$ are the maximal colluding sets and $|I|=r$. Notice that we assume that the observers do not share information between them, so the collusion pattern is not necessarily closed under unions.
The union-closed case is just the case where the collusion pattern is a disjoint union of maximal colluding sets.

\begin{definition}\label{def: lift}
	Let $\mathcal{T}$ be a collusion pattern with ground set $[n]$, and $Q\subseteq \F^n$ a linear code over a field $\F$.  
The \emph{lift} of $Q$ over the collusion pattern $\mathcal{T}$ is  
$$Q^\mathcal{T}=\{x\in \mathbb{F}^n: \exists y \in Q \quad \text{s.t. } x|_T=y|_T\quad \forall T \in \mathcal{T}\}.$$
\end{definition}

By definition, for fixed $Q$, the code $Q^\T$ is the largest code for which the conditions $Q_{|T}=Q'_{|T}$ holds for all $T\in\mathcal{T}$. We can think of the choice in which state to communicate as a secret. 

We will now illustrate this construction with an example.
\begin{example}\label{ex: basic retrieval code}
	Let $Q \subseteq \mathbb{F}_2^5$ be the code 
generated by 
	\[
	G_Q=
	\begin{bmatrix}
	1 & 0 & 0 & 1 & 0\\
	0 & 1 & 0 & 1 & 1\\
	0 & 0 & 1 & 0 & 1\\
	\end{bmatrix}
	\]
	and let $\mathcal{T}=\{\{1,2,3\}, \{3,4,5\}\}$. Then $Q^\mathcal{T}=\mathbb{F}_2^5$. To see this, take any codeword $x=(x_1,x_2,x_3,x_4,x_5)\in \mathbb{F}_2^5$. 
The codewords $y=(x_1,x_2,x_3,x_1+x_2,x_2+x_3) \in Q$ and $y'=(x_4-x_5+x_3,x_5-x_3,x_3,x_4,x_5)\in Q$ satisfy $x|_{\{1,2,3\}}=y|_{\{1,2,3\}}$ and $x|_{\{3,4,5\}}=y'|_{\{3,4,5\}}$. Therefore,  $Q^\mathcal{T}=\mathbb{F}_2^5$.
\end{example}

In the context of PIR, the secret is the identity of the downloaded file and the code $Q$ is used to send queries to the database~\cite{chor1998private, ostrovsky2007survey}. This work is also related to oblivious transfer~\citep{di2000single}, collision-resistant hashing~\cite{ishai2005sufficient}, locally decodable codes~\cite{katz2000efficiency, yekhanin2010locally} and secret sharing~\cite{shamir1979share, beimel2012share}. Motivated by several of these applications, it is natural to define the {\em secrecy rate} as\[\frac{\dim(Q^\mathcal{T}/Q)}{n}.\] In the words of our general communication setting, this is the fraction of the channel's bandwidth that can be used to send high-entropy information that will not interfere with the low-entropy signal.

Our scheme relies on the ability of the transmitter and receiver to prevent the detection of secret transmission. In this respect, it is related to the field of information hiding, which originated in antiquity and has received renewed attention since the 1990s due to the need to protect the copyright on digital content~\cite{petitcolas1999information}. Some of the known strategies for information hiding include quantize-and-replace~\cite{barton1997method, tanaka1990embedding}, low-bit modulation~\cite{swanson1997data}, additive spread spectrum~\cite{cox1996secure} and quantization index modulation~\cite{chen2001quantization}. For a general reference, we refer the reader to~\cite{cox2007digital}. 

In this paper, we focus on the relationship between the combinatorics of the communication code $Q$ and the collusion pattern $\mathcal{T}$. In particular, we are interested in conditions under which $Q=Q^\mathcal{T}$, so that no information hiding is possible.

It has been shown that if the entire ground set is an element of the collusion pattern, then Alice and Bob cannot share secret information~\cite{chor1998private}. In the current setting, this is just the trivial observation that $Q^{\langle[n]\rangle}=Q$ for any code $Q\subseteq \F^n$.
On the other hand, if the collusion pattern is disconnected, i.e.,
\[\mathcal{T}\subseteq\mathcal{T}_1\oplus\mathcal{T}_2,\] where $\T_1$ and $\T_2$ are disjoint collusion patterns 
then it is always possible to choose $Q$ such that $Q\subsetneq Q^\mathcal{T}$~\cite{TGKFHR}.

One collusion pattern of special importance is the $t$-collusion~\citep{beimel2005general}. It allows to model the situation when only the maximal size of colluding sets is known but the pattern itself is unknown.  
\begin{definition}
A \emph{$t$-collusion} pattern is the collusion pattern given by
\[\mathcal{T}=\langle T: |T|=t\rangle.\]
\end{definition}

As we will explain in Section~\ref{sec: construction of lift}, if $\mathcal{T}$ is a $t$-collusion, then $Q=Q^\mathcal{T}$ whenever $\dim Q < t$.

The starting point of this paper is the following example~\citep[Section 7.5]{SunJafar-conjecture}.

\begin{example}\label{motivating example}
Let $Q$ is an $[n,t-1]$ MDS code. Let 
\[\mathcal{T}=\langle\{i,i+1,\dots,i+t-1\}:i \in [n]\}\rangle,\]
with indices taken modulo $n$. Then $Q=Q^\mathcal{T}$. 
\end{example}

The authors of~\citep{SunJafar-conjecture} use the symmetry of this collusion pattern to derive this result. However, the symmetry is not essential for the result to hold, and in Theorem~\ref{theorem: triangular matrix conditions for basis} we show the stronger result that any $n-t+1$ of the generators constructed as in Example~\ref{motivating example} are enough to guarantee that $Q=Q^\mathcal{T}$. 

The setting in Example~\ref{motivating example} is not unique and our goal was to understand which combinatorial properties of the code $Q$ determine when a collusion pattern $\mathcal{T}$ is equivalent to $t$-collusion. In Theorem~\ref{thm:matroid invariance} we show that the matroid structure of $Q^\mathcal{T}$ is determined by the so-called derived matroid of $Q$. 
We show that all codes with the same matroid and derived matroid will have combinatorially equivalent lifts, in a sense that is made precise in Section~\ref{sec: invariance wrt to derived matroid}. This connection to the derived matroid allows us to determine several other $t$-collusion equivalent patterns, which we present in Section~\ref{sec:t-equivalence}.

\section{Combinatorial preliminaries}
\label{sec2:combinatorial prelims}
In this section, we will review the basics of matroid theory, with a special focus on applications in coding theory. For general references on matroid theory we refer the reader to~\cite{oxley2006matroid, oxley2019derived}. 

\begin{definition}
 A \emph{matroid} $M$ is a pair $(\mathcal{C},E)$, where $E$ is the \emph{ground set} and $\mathcal{C}\subseteq 2^{E}$ is a collection of finite subsets of $E$, called \textit{circuits}, that satisfies the following axioms:

\begin{enumerate}
	\item $\emptyset \notin \mathcal{C}$
	\item If $C',C''\in \mathcal{C}$ and $C' \subseteq C''$, then $C'=C''$
	\item If $C',C''\in \mathcal{C}$, $C' \neq C''$ and $e \in C' \cap C''$, then there exists a circuit $C \in \mathcal{C}$ such that $C \subseteq (C' \cup C'') - {e}$.
\end{enumerate}   
\end{definition}

The {\em direct sum}, or {\em disjoint union} of two matroids $M=(\mathcal{C},E)$ and $M'=(\mathcal{C}',E')$ is the matroid $$M\oplus M' = (\mathcal{C}\sqcup \mathcal{C}',E\sqcup E').$$ It is straightforward to verify that this is indeed a matroid.
In this work, $E$ is a finite set, which we identify without loss of generality with $[n]:=\{1,2,\dots,n\}$. A set is \emph{dependent} if it contains a circuit and \emph{independent} otherwise. Thus circuits are the minimal dependent sets.  A matroid that can not be written as the disjoint union of two nonempty matroids is called \emph{connected}.

A maximal independent set is called a \emph{basis}. Given a basis $B$ of a matroid and an element $e$ in $[n]-B$, there exists a unique circuit contained in $B \cup \{e\}$, known as the \emph{fundamental circuit of $e$ with respect to $B$}.

An element $e$ of the ground set is called a \emph{loop} if it is a circuit, and a \emph{coloop} if it is contained in every basis.

For any $X \subseteq [n]$ we can define the \emph{rank} $r(X)$ as the size of the largest independent set contained in $X$. In particular, the rank $r=r([n])$ of the matroid of an $\mathbb{F}$-linear code $Q$ is equal to the dimension of the code.

If $X$ is a subset of $[n]$, then the \emph{closure} $\operatorname{cl}(X)$ is the set $\{e \in [n]: r(X \cup \{e\})=r(X)\}$. If $\operatorname{cl}(X)=X$, then $X$ is a \emph{flat}. Flats of rank $r-1$ are called \emph{hyperplanes}.

The \emph{dual} matroid $M^*$ is the matroid on the same ground set $[n]$ whose circuits are the sets whose complements are hyperplanes in $M$. The dual rank of a set is given by $r^*(X)=r([n]-X)+|X|-r(X)$. 

The \textit{deletion} of $M$ by $X\subseteq [n]$ is the matroid $M\backslash X=([n]-X,\mathcal{C}\backslash X)$, where $\mathcal{C}\backslash X:=\{C \in \mathcal{C}: C \subseteq [n]-X\}$. In coding theory, deletion corresponds to puncturing of a linear code by columns in $X$. The \textit{contraction} of $M$ by $X$ is the matroid $M/X=([n]-X,\mathcal{C}/X)$, where $\mathcal{C}/X:=\{S \in 2^{[n]-X}: \exists S' \subseteq X \text{ s.t. } S \cup S'\in \mathcal{C}\}$. It corresponds to shortening of a linear code by columns in $X$.

A \textit{minor} of a matroid $M$ is the matroid that is obtained from $M$ by a sequence of deletions and contractions. Such sequences are associative and the operations of contraction and deletion commute. This implies that every  minor can be written $M\backslash Y/X$ for $X,Y \subseteq E$ and $X \cap Y = \emptyset$.

In this paper we work with \emph{$\mathbb{F}$-representable} matroids over finite ground sets $[n]$. This means matroids $M$ for which there exist a matrix $G$ over $\mathbb{F}$ with $n$ columns, such that a set $C\subseteq [n]$ is a circuit in $M$ if the columns indexed by $C$ form a minimal $\mathbb{F}$-linearly dependent set in $G$. The $\mathbb{F}$-linear code $Q$ generated by such a matrix $G$ is called a \emph{representation} of $M$.

Let $M=(\mathcal{C},[n])$ be an $\mathbb{F}$-representable matroid and $Q$ a representation of $M$. For every circuit $C$ of $M$, there exists a vector in the dual code $Q^{\perp}\subseteq \mathbb{F}^n$ that is supported on $C\subseteq [n]$. Such vectors are called \emph{circuit vectors} and they are unique up to non-zero scalar multiplication. Associated to the code $Q$, this set of circuit vectors represent a representation of the \emph{derived matroid} $\delta M (Q)$, which is an $\mathbb{F}$-representable matroid on the ground set $\mathcal{C}$~\cite{oxley2019derived}. The circuits of $\delta M (Q)$ are given by the minimal linear dependencies between the circuit vectors in $Q^{\perp}$. The derived matroid depends on the representation of $M$, so the notation $\delta M(Q)$ always refers to the specific representation $Q$. It is proven in~\cite{oxley2019derived} that a matroid $M$ is connected if and only if all of its derived matroids are.

\section{Construction of the lift} 
\label{sec: construction of lift}
It will often be useful to consider equivalent definitions of the lift, which we will present below.

\begin{notation}\label{notation: distance}
Let $d(x,y)$ be the Hamming distance of two strings $x$ and $y$ of equal length, let $Q$ be a linear code, and let $T$ be a subset of the coordinate set of $Q$. We write:
\begin{itemize}
	\item $d_{|T}(x,y)$ to denote the Hamming distance of $x$ and $y$ restricted to a subset $T$ of coordinates, \emph{i.e.,} $$d_{|T}(x,y)=|\{i\in T|x_i \neq y_i\}|;$$
	\item $d(x,Q):=\min\limits_{y \in Q} d(x,y)$ and $d_{|T}(x,Q):=\min\limits_{ y \in Q}d_{|T}(x,y)$.
\end{itemize} 
\end{notation}

\begin{lemma}\label{def: lift via dual code}
Let $Q\subseteq \F^n$ be a linear code with dual code $Q^\perp$. Let $x\in\F^n$ and let $T\subseteq [n]$. Then $d_{|T}(x,Q)=0$ holds if and only if $x\cdot v=0$ for all $v\in Q^\perp$ with $\supp(v)\subseteq T$.
\end{lemma}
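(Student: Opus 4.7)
The plan is to translate $d_{|T}(x,Q)=0$ into the cleaner statement that $x|_T \in Q|_T$, where $Q|_T \subseteq \F^T$ denotes the coordinate projection of $Q$ onto $T$, and then reduce the lemma to an instance of standard biduality in $\F^T$ after identifying the orthogonal complement of $Q|_T$ there.

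The forward direction I would dispatch by a one-line computation: pick a witness $y \in Q$ with $x|_T = y|_T$, and for any $v \in Q^\perp$ with $\supp(v) \subseteq T$ write
$$x \cdot v = \sum_{i \in T} x_i v_i = \sum_{i \in T} y_i v_i = y \cdot v = 0,$$
where the last equality uses $y \in Q$ and $v \in Q^\perp$.

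For the converse, the key step is to identify the orthogonal complement $(Q|_T)^\perp$ taken inside $\F^T$. A vector $w \in \F^T$ lies in $(Q|_T)^\perp$ iff $\sum_{i \in T} w_i y_i = 0$ for every $y \in Q$, which holds iff the zero-extension $\tilde w \in \F^n$ of $w$ lies in $Q^\perp$ (and by construction has $\supp(\tilde w) \subseteq T$). Hence $(Q|_T)^\perp$ equals the restrictions to $T$ of the set $\{v \in Q^\perp : \supp(v) \subseteq T\}$. By finite-dimensional biduality, $x|_T \in Q|_T$ iff $(x|_T) \cdot w = 0$ for every $w \in (Q|_T)^\perp$, and via the identification above this last condition is equivalent to $x \cdot v = 0$ for every $v \in Q^\perp$ with $\supp(v) \subseteq T$.

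The main subtlety, rather than a genuine obstacle, is bookkeeping around two ambient spaces: the orthogonality in the lemma statement lives in $\F^n$, while the biduality argument naturally takes place in $\F^T$. The zero-extension bijection between vectors of $\F^T$ and vectors of $\F^n$ supported on $T$ preserves the pairing with $x$ (since coordinates outside $T$ contribute nothing), and this is precisely what allows the two orthogonality conditions to be identified.
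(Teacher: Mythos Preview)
Your proof is correct, but it takes a genuinely different route from the paper's own argument. For the forward direction both arguments are essentially identical (the paper phrases it as a contrapositive, you compute directly). For the converse, however, the paper proceeds by induction on $n-|T|$: starting from $T=[n]$ and, at each step, adjoining a coordinate $t\notin T$, extending the space $Q^\perp(T)$ by at most one generator $w$, and modifying $x$ on coordinate $t$ so that it becomes orthogonal to $w$; the induction hypothesis then yields the witness $y\in Q$. Your argument instead works entirely inside $\F^T$: you identify $(Q|_T)^\perp$ with the restrictions of $\{v\in Q^\perp:\supp(v)\subseteq T\}$ via zero-extension, and then invoke finite-dimensional biduality $(Q|_T)^{\perp\perp}=Q|_T$ to conclude.

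Your approach is shorter and more conceptual, reducing the statement to a single standard linear-algebra fact once the bookkeeping between $\F^n$ and $\F^T$ is set up. The paper's inductive proof, by contrast, is more constructive: it effectively builds the witness $y\in Q$ with $y|_T=x|_T$ one coordinate at a time, which could be useful if one wanted an explicit algorithm rather than just existence.
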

\begin{proof}

We denote 
$$Q^\perp(T) := \{v\in Q^\perp : \supp v\subseteq T\},$$ 
and note that this is a linear subspace of $Q^\perp$. First, assume that there exist\sout{s} $v\in Q^\perp(T)$, such that $x\cdot v\neq 0$, and $x'\in Q$ with $d_{|T}(x',x)=0$. Then $x'\cdot v=x\cdot v\neq 0$, which is a contradiction, and thus $d_{|T}(x,Q)\neq 0$.

Conversely, assume that  $x\cdot v=0$ for all $v\in Q^\perp(T)$. We will prove by induction on $n-|T|$ that there exists $y\in Q$ such that $y_{|T}=x_{|T}$. This is immediate (with $y=x$) if $T=[n]$. Now if $T\neq[n]$, choose $t\notin T$, and let $T'=T\cup\{t\}$. As an induction hypothesis, assume that if $x'\cdot v=0$ for all $v\in Q^\perp(T')$, then there exists $y\in Q$ such that $y_{|T'}=x_{|T'}$. Now clearly $Q^\perp(T)\subseteq Q^\perp(T')$ and $\dim Q^\perp(T')\leq Q^\perp(T)+1$, so there is a codeword $w\in Q^\perp(T')$ such that $Q^\perp(T')=Q^\perp(T)\oplus \{c\}$. Now let $x'$ agree with $x$ outside of $t$, and get a value on $t$ such that $x'\cdot w = 0$. Then $x'\cdot v=0$ for all $v\in Q^\perp(T')$, and so by induction there is $y\in Q$ with $y_{T'}=x'_{T'}$, wherefore $y_{T}=x'_{T}=x_T$.
\end{proof}

\begin{proposition}
Definition~\ref{def: lift} of the lift can be rewritten as
\begin{enumerate}
    \item $Q^\mathcal{T}=\{x\in \mathbb{F}^n: d_{|T}(x,Q)=0\quad \forall T \in \mathcal{T}\}$
    \item $Q^\mathcal{T}=\{x\in \mathbb{F}^n: x\cdot v=0\ \text{for all } v\in Q^\perp(T) \ \text{and} \ T \in \mathcal{T}\}$. 
\end{enumerate}
\end{proposition}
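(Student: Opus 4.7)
The plan is to derive both equalities directly from Definition~\ref{def: lift}, with Lemma~\ref{def: lift via dual code} handling the translation to dual-code conditions for part (2).

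For part (1), I would simply unpack the Hamming-distance notation introduced in Notation~\ref{notation: distance}: by definition, $d_{|T}(x, Q) = 0$ means there exists $y \in Q$ with $d_{|T}(x, y) = 0$, i.e., $x|_T = y|_T$. Since Definition~\ref{def: lift} asserts the existence of such a witness $y$ for each $T \in \mathcal{T}$ (and the witness is allowed to depend on $T$, as the two distinct codewords $y, y'$ used in Example~\ref{ex: basic retrieval code} make clear), the first rewrite is a verbatim translation into distance language.

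For part (2), I would invoke Lemma~\ref{def: lift via dual code} once for each $T \in \mathcal{T}$: the lemma converts the condition $d_{|T}(x, Q) = 0$ into the statement that $x \cdot v = 0$ for every $v \in Q^\perp$ with $\supp(v) \subseteq T$, i.e., for every $v \in Q^\perp(T)$. Conjoining these equivalent conditions over all $T \in \mathcal{T}$ turns the characterization established in part (1) into the characterization in part (2).

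No substantive obstacle arises; the proposition is essentially a change of language among three equivalent descriptions of the same set. The only point deserving explicit emphasis in the write-up is that the witness codeword in Definition~\ref{def: lift} may vary with $T$, so that Lemma~\ref{def: lift via dual code} is applied set-by-set and the resulting conditions combined, rather than attempting to produce a single global $y \in Q$ agreeing with $x$ on $\bigcup_{T \in \mathcal{T}} T$.
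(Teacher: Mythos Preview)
Your proposal is correct and matches the paper's own proof essentially verbatim: the paper also observes that part~(1) is Definition~\ref{def: lift} rewritten in the notation of Notation~\ref{notation: distance}, and that part~(2) then follows by applying Lemma~\ref{def: lift via dual code}. Your additional remark that the witness $y$ may depend on $T$ is a helpful clarification but not a departure from the paper's argument.
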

\begin{proof}
The first equality is just Definition~\ref{def: lift} restated using Notation~\ref{notation: distance}. The second one follows from Lemma~\ref{def: lift via dual code}.
\end{proof}

\begin{lemma}\label{lemma: union and intersection of collusion patterns}
Let $\mathcal{S}$ and $\mathcal{T}$ be two collusion patterns on ground set $[n]$, and let $Q\in\F^n$ be a linear code. Then \begin{enumerate}\label{lm:collops}
    \item $Q^{\mathcal{S}\cup\mathcal{T}}=Q^\mathcal{S}\cap Q^\mathcal{T}$.
    \item $Q^{\mathcal{S}\cap\mathcal{T}}=(Q^\mathcal{S})^\mathcal{T}$.
\end{enumerate}
\end{lemma}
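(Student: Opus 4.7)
The plan is to prove both parts from the primal definition of the lift, falling back on the dual characterization of Lemma~\ref{def: lift via dual code} for the delicate direction of part~(2).

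Part~(1) reduces to splitting a universal quantifier. Indeed, $x\in Q^{\mathcal{S}\cup\mathcal{T}}$ holds if and only if $d_{|U}(x,Q)=0$ for every $U\in\mathcal{S}\cup\mathcal{T}$, which happens if and only if the condition holds separately for all $U\in\mathcal{S}$ and for all $U\in\mathcal{T}$. This is the definition of $Q^\mathcal{S}\cap Q^\mathcal{T}$, so part~(1) amounts to an unpacking of definitions in one or two lines.

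For part~(2) I would prove the two inclusions separately. The inclusion $(Q^\mathcal{S})^\mathcal{T}\subseteq Q^{\mathcal{S}\cap\mathcal{T}}$ is direct: for any $U\in\mathcal{S}\cap\mathcal{T}$, applying the definition of $(Q^\mathcal{S})^\mathcal{T}$ to $U\in\mathcal{T}$ yields $x|_U\in (Q^\mathcal{S})|_U$, and the defining property $(Q^\mathcal{S})|_S = Q|_S$ for every $S\in\mathcal{S}$ then gives $x|_U\in Q|_U$ since $U\in\mathcal{S}$.

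The reverse inclusion $Q^{\mathcal{S}\cap\mathcal{T}}\subseteq (Q^\mathcal{S})^\mathcal{T}$ is the substantive step, and I would attack it through the dual formulation. By Lemma~\ref{def: lift via dual code}, $x\in Q^{\mathcal{S}\cap\mathcal{T}}$ is equivalent to $x\cdot v=0$ for every $v\in\sum_{U\in\mathcal{S}\cap\mathcal{T}} Q^\perp(U)$, while $x\in (Q^\mathcal{S})^\mathcal{T}$ is equivalent to $x\cdot v=0$ for every $v\in (Q^\mathcal{S})^\perp=\sum_{S\in\mathcal{S}} Q^\perp(S)$ whose support is contained in some $T\in\mathcal{T}$. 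The problem therefore reduces to the combinatorial-linear claim that every such $v$ can be rewritten as $\sum_j w_j$ with $w_j\in Q^\perp(U_j)$ and $U_j\in\mathcal{S}\cap\mathcal{T}$. This is the main obstacle: in an expansion $v=\sum_i v_i$ with $v_i\in Q^\perp(S_i)$ and $S_i\in\mathcal{S}$, the individual summands need not be supported in $T$, so the desired decomposition must exploit cancellations of the $v_i$ on $[n]\setminus T$. I would try to handle this inductively on the number of summands, peeling off at each step a contribution supported in some $S_i\cap T$ and using that $S_i\cap T\in\mathcal{S}\cap\mathcal{T}$ by the simplicial-complex property of both patterns.
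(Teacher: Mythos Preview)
Your handling of part~(1) is correct and essentially mirrors the paper's argument; the only difference is that you phrase it primally while the paper phrases it dually via Lemma~\ref{def: lift via dual code}, which is cosmetic.

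For part~(2), your inclusion $(Q^\mathcal{S})^\mathcal{T}\subseteq Q^{\mathcal{S}\cap\mathcal{T}}$ is sound, and you have correctly isolated the crux of the reverse inclusion: one would need every $v\in(Q^\mathcal{S})^\perp$ with $\supp v\in\mathcal{T}$ to lie in the span of $\bigcup_{U\in\mathcal{S}\cap\mathcal{T}}Q^\perp(U)$. Unfortunately this claim is \emph{false}, so no inductive peeling argument of the kind you sketch can succeed. A concrete counterexample over any field: take $n=4$, let $Q^\perp=\operatorname{span}\{(1,1,1,0),(0,1,1,1)\}$, set $\mathcal{S}=\langle\{1,2,3\},\{2,3,4\}\rangle$ and $\mathcal{T}=\langle\{1,4\}\rangle$. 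Then $(Q^\mathcal{S})^\perp=Q^\perp$, so $Q^\mathcal{S}=Q$ and $(Q^\mathcal{S})^\mathcal{T}=Q^\mathcal{T}=\{x:x_1=x_4\}$ has dimension~$3$. On the other hand $\mathcal{S}\cap\mathcal{T}=\{\emptyset,\{1\},\{4\}\}$ and $Q^\perp(\{1\})=Q^\perp(\{4\})=0$, so $Q^{\mathcal{S}\cap\mathcal{T}}=\F^4$. Thus $Q^{\mathcal{S}\cap\mathcal{T}}\not\subseteq (Q^\mathcal{S})^\mathcal{T}$, and the vector $(1,0,0,-1)\in(Q^\mathcal{S})^\perp$ with support $\{1,4\}\in\mathcal{T}$ witnesses the failure of your combinatorial--linear claim.

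The paper's own proof of part~(2) does not fare better: it simply asserts the middle equality in its displayed chain, which is precisely the step you flagged as the main obstacle. So the gap you sensed is real and, as the counterexample shows, unbridgeable; part~(2) as stated is incorrect. The one downstream use of part~(2), namely $Q^{\mathcal{T}}=Q^{\mathcal{T}\cap\mathcal{C}}$ in Lemma~\ref{lemma: lift given by colluding circuits}, can be salvaged directly from the dual description (since any $v\in Q^\perp$ with $\supp v\in\mathcal{T}$ lies in $Q^\perp(T)$ for some $T\in\mathcal{T}$, and $Q^\perp(T)$ is spanned by circuit vectors supported inside $T$), so the later results are unaffected.
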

\begin{proof}
By Lemma \ref{def: lift via dual code} we have that $Q^{\mathcal{S}\cup\mathcal{T}}$ consists of those $x\in \F^n$ that are orthogonal to all $v\in Q^\perp$ with $\supp(v)\in \mathcal{S}\cup\mathcal{T}$, which is equivalent to that $x$ is orthogonal both to those $v\in Q^\perp$ that have $\supp(v)\in \mathcal{S}$ and those that have $\supp(v)\in \mathcal{T}$, so $x\in Q^\mathcal{S}\cap Q^\mathcal{T}$.

Using Lemma \ref{def: lift via dual code} twice, we have that 
\begin{align*}
(Q^{\mathcal{S}})^{\mathcal{T}}&=\{x\in\F^n : x\cdot v=0\mbox{ for all } v\in (Q^\mathcal{S})^{\perp} \mbox{ with } \supp v\in \mathcal{T}\}\\
&= \{x\in\F^n : x\cdot v=0\mbox{ for all } v\in Q^\perp \mbox{ with } \supp v\in \mathcal{S}\cap\mathcal{T}\}\\
&= Q^{\mathcal{S}\cap\mathcal{T}}.
\end{align*}
\end{proof}
\begin{corollary}\label{cor:connected}
Let $\T=\T_1\sqcup \T_2$ is disconnected, with the vertex sets of $\T_1$ and $\T_2$ disjoint, and let $Q_i$ denote the restriction of $Q$ to the vertex set of $\T_i$. Then $Q^\mathcal{T}=Q_1^{\T_1}\times Q_2^{\T_2}\times \F^{[n]-V_1-V_2}$. 
\end{corollary}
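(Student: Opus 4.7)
The plan is to apply Lemma~\ref{lemma: union and intersection of collusion patterns}(1) to split $Q^\T$ as an intersection, then to identify each factor as a product code in which the coordinates outside $V_i$ are unrestricted. Let $V_0:=[n]-V_1-V_2$. Since the vertex sets $V_1,V_2$ are disjoint, every face of $\T_1\sqcup\T_2$ lies in exactly one of $\T_1,\T_2$, so as simplicial complexes on $[n]$ we have $\T=\T_1\cup\T_2$, and Lemma~\ref{lemma: union and intersection of collusion patterns}(1) yields $Q^\T=Q^{\T_1}\cap Q^{\T_2}$.

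Next, I would show that $Q^{\T_1}=Q_1^{\T_1}\times\F^{V_2\cup V_0}$, under the natural identification $\F^n\cong\F^{V_1}\times\F^{V_2}\times\F^{V_0}$. By Definition~\ref{def: lift}, $x\in Q^{\T_1}$ iff for every $T\in\T_1$ there exists $y\in Q$ with $x|_T=y|_T$, i.e., iff $x|_T\in Q|_T$ for every $T\in\T_1$. Since each such $T$ is contained in $V_1$, the projection $Q|_T$ coincides with $Q_1|_T$: indeed $Q_1=Q|_{V_1}$ and projections onto nested coordinate subsets compose. Hence the constraints defining $Q^{\T_1}$ involve only $x|_{V_1}$ and amount precisely to $x|_{V_1}\in Q_1^{\T_1}$, while the coordinates in $V_2\cup V_0$ remain unconstrained. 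A symmetric argument gives $Q^{\T_2}=\F^{V_1\cup V_0}\times Q_2^{\T_2}$.

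Intersecting the two product descriptions, the $V_1$-block must lie in $Q_1^{\T_1}$ (forced only by $Q^{\T_1}$), the $V_2$-block must lie in $Q_2^{\T_2}$ (forced only by $Q^{\T_2}$), and the $V_0$-block is free in both. This produces the claimed decomposition $Q^\T=Q_1^{\T_1}\times Q_2^{\T_2}\times\F^{V_0}$. The only substantive ingredient is the identity $Q|_T=Q_1|_T$ for $T\subseteq V_1$, which is immediate from the definition of restriction as a coordinate projection; the remainder is bookkeeping and I anticipate no real obstacle.
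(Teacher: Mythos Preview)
Your proof is correct and follows essentially the same approach as the paper: apply Lemma~\ref{lemma: union and intersection of collusion patterns}(1) to write $Q^{\mathcal{T}}=Q^{\mathcal{T}_1}\cap Q^{\mathcal{T}_2}$, identify each $Q^{\mathcal{T}_i}$ with $Q_i^{\mathcal{T}_i}\times\F^{[n]-V_i}$, and intersect. The paper states the identification $Q^{\mathcal{T}_i}=Q_i^{\mathcal{T}_i}\times\F^{[n]-V_i}$ as ``clearly by definition,'' whereas you spell it out via the observation $Q|_T=Q_1|_T$ for $T\subseteq V_1$; otherwise the arguments coincide.
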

\begin{proof}
Clearly by definition, $Q^{\T_i}=Q_i^{\T_i}\times \F^{[n]-V_i}$. Thus we get by Part 1 of Lemma~\ref{lemma: union and intersection of collusion patterns} that $$Q^\mathcal{T}=Q_1^{\T_1}\times \F^{[n]-V_1}\cap Q_1^{\T_2}\times \F^{[n]-V_2} = Q_1^{\T_1}\times Q_2^{\T_2}\times \F^{[n]-V_1-V_2}.$$
\end{proof}

By definition, if $\mathcal{T}$ and $\mathcal{T}'$ are two collusion patterns such that $\mathcal{T} \subseteq \mathcal{T}'$, then for any linear code $Q$, $Q^\mathcal{T} \supseteq Q^{\mathcal{T}'}$. The following lemma, which will be used many times in the paper, shows that we can restrict attention to dual vectors that are supported on the collusion pattern over which we are lifting.
\begin{lemma}\label{lemma: lift given by colluding circuits}
Let $Q$ be a code with matroid $M(\mathcal{C},[n])$, and let $\mathcal{T}$ be a collusion pattern on ground set $[n]$. Then $$Q^{\mathcal{T}}=Q^{\mathcal{T}\cap\mathcal{C}}.$$ 
\end{lemma}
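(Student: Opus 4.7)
The plan is to use the dual-vector characterization of the lift from Lemma~\ref{def: lift via dual code}, namely that $x\in Q^\mathcal{T}$ if and only if $x\cdot v=0$ for every $v\in Q^\perp$ whose support is contained in some $T\in\mathcal{T}$. The inclusion $Q^{\mathcal{T}}\subseteq Q^{\mathcal{T}\cap\mathcal{C}}$ is immediate from $\mathcal{T}\cap\mathcal{C}\subseteq\mathcal{T}$, since imposing more constraints only shrinks the lift. The substantive direction is the reverse inclusion.

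For that direction, I would take $x\in Q^{\mathcal{T}\cap\mathcal{C}}$, so that $x\cdot v_C=0$ for every circuit $C\in\mathcal{T}\cap\mathcal{C}$, and aim to deduce that $x\cdot v=0$ for arbitrary $v\in Q^\perp$ with $\supp(v)\subseteq T$ for some $T\in\mathcal{T}$. It would suffice to decompose
\[v=\sum_i \lambda_i v_{C_i},\qquad \text{where each } C_i\subseteq \supp(v)\subseteq T \text{ is a circuit.}\]
Since $\mathcal{T}$ is downward closed, each such $C_i$ lies in $\mathcal{T}\cap\mathcal{C}$, so the assumption on $x$ forces $x\cdot v_{C_i}=0$, and hence $x\cdot v=0$.

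The key technical ingredient is the decomposition claim: every $v\in Q^\perp$ is a linear combination of circuit vectors $v_C$ with $C\subseteq \supp(v)$. I would prove this by induction on $|\supp(v)|$. The case $v=0$ is vacuous. Otherwise $\supp(v)$ is a dependent set of the matroid $M$ (because $v$ witnesses a nontrivial linear dependence among the columns of a generator matrix of $Q$ indexed by $\supp(v)$), so it contains a circuit $C$. The circuit vector $v_C$ is supported exactly on $C$: a smaller support would exhibit a proper subset of $C$ as dependent, contradicting minimality of $C$. Choosing any $e\in C$, both $v_e$ and $(v_C)_e$ are nonzero, so some scalar $\lambda\in\F$ satisfies $(v-\lambda v_C)_e=0$, giving $\supp(v-\lambda v_C)\subsetneq \supp(v)$. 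The inductive hypothesis applied to $v-\lambda v_C$ completes the decomposition, with all circuits appearing still contained in $\supp(v)$.

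The main obstacle is really just this decomposition lemma; it is a standard fact about matroids of linear codes, but it is the only place where any matroid-theoretic content enters. Once it is established, the identity $Q^{\mathcal{T}}=Q^{\mathcal{T}\cap\mathcal{C}}$ is simply a matter of translating between the support condition on general dual vectors and the support condition on the distinguished circuit vectors $v_C$.
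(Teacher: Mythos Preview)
Your argument is correct. The decomposition lemma you prove by support-shrinking induction is exactly the substantive content, and once it is in hand both inclusions follow as you describe.

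The paper organizes the same idea differently. Rather than proving the decomposition here, it first observes that $Q=Q^{\mathcal{C}}$ (essentially asserting, as you do, that ``the linear conditions are generated by the circuit vectors''), and then invokes the identity $Q^{\mathcal{S}\cap\mathcal{T}}=(Q^{\mathcal{S}})^{\mathcal{T}}$ from Lemma~\ref{lemma: union and intersection of collusion patterns} with $\mathcal{S}=\mathcal{C}$ to conclude $Q^{\mathcal{T}\cap\mathcal{C}}=(Q^{\mathcal{C}})^{\mathcal{T}}=Q^{\mathcal{T}}$. So the paper factors the argument through a general formula for nested lifts, while you unpack the key step directly and give an explicit inductive proof that every $v\in Q^\perp$ decomposes into circuit vectors supported inside $\supp(v)$. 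Your route is more self-contained and makes the matroid content visible at the point of use; the paper's route is shorter on the page but leans on an earlier lemma whose proof already hides the same decomposition. Either way, the mathematical engine is identical.
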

\begin{proof}
The linear conditions are generated by the circuit vectors, so \begin{align*}Q&=\{x\in\F^n : x\cdot v=0 \mbox{ for all } v\in Q^\perp \mbox{ with }\supp(v)\in\mathcal{C}\}\\
& =\bigcap_{C\in \mathcal{C}}\{x\in\F^n : x\cdot v=0 \mbox{ for } v\in Q^\perp \mbox{ with }\supp(v)=C\}\\
&= \bigcap_{C\in \mathcal{C}}\{x\in\F^n : x_C\in Q_C\}=Q^\mathcal{C}.\end{align*} By Lemma~\ref{lm:collops}, we thus have $Q^{\mathcal{T}\cap\mathcal{C}}=(Q^\mathcal{C})^\mathcal{T}=Q^\mathcal{T}$.
\end{proof}

\begin{corollary}\label{cor: elementary properties}
Let $Q$ be a code with matroid $M(\mathcal{C},[n])$, and let $\mathcal{T}$ be a collusion pattern on ground set $[n]$. Then
\begin{enumerate}
    \item if $\mathcal{C} \subseteq \mathcal{T}$, then $Q = Q^\mathcal{T}$;
    \item $Q^\mathcal{T} = \mathbb{F}^n$ if and only if $\mathcal{C} \cap \mathcal{T}=\emptyset$; \label{lift is field}
    \item If $Q$ is an $[n,k]$ MDS code and for all $T$, $|T|\leq k$, then $Q^\mathcal{T}=\mathbb{F}^n$.
\end{enumerate}
\end{corollary}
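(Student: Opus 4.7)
The plan is to derive all three parts as essentially immediate consequences of Lemma~\ref{lemma: lift given by colluding circuits}, together with the identity $Q=Q^\mathcal{C}$ that was established in the middle of its proof. The unifying observation is that the only relevant portion of $\mathcal{T}$ for computing the lift is its intersection with the set of circuits $\mathcal{C}$, so each statement reduces to a claim about $\mathcal{T}\cap\mathcal{C}$.

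For Part~1, I would apply Lemma~\ref{lemma: lift given by colluding circuits}: since $\mathcal{C}\subseteq\mathcal{T}$, we get $\mathcal{T}\cap\mathcal{C}=\mathcal{C}$, hence $Q^{\mathcal{T}}=Q^{\mathcal{C}}$. The chain of equalities in the proof of Lemma~\ref{lemma: lift given by colluding circuits} establishes $Q=Q^\mathcal{C}$ directly, so $Q=Q^\mathcal{T}$. For Part~2, the backward direction is immediate from Lemma~\ref{lemma: lift given by colluding circuits}: if $\mathcal{C}\cap\mathcal{T}=\emptyset$, then $Q^\mathcal{T}=Q^{\mathcal{C}\cap\mathcal{T}}=Q^{\emptyset}=\mathbb{F}^n$, where the last equality holds because no circuit vector is supported on a set of $\mathcal{T}$ so the defining linear conditions from Lemma~\ref{def: lift via dual code} are vacuous. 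For the forward direction, I would argue by contrapositive: if $C\in\mathcal{C}\cap\mathcal{T}$, then the associated circuit vector $v\in Q^\perp$ is nonzero with $\supp(v)=C\subseteq T$ for some $T\in\mathcal{T}$, so the linear form $x\mapsto x\cdot v$ is a nontrivial constraint on $Q^\mathcal{T}$, and $Q^\mathcal{T}\subsetneq\mathbb{F}^n$.

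For Part~3, the key input is the description of the matroid of an $[n,k]$ MDS code: it is the uniform matroid $U_{k,n}$, so its circuits are precisely the $(k+1)$-subsets of $[n]$. I would briefly justify this from the defining property of MDS codes, namely that every $k$ columns of a generator matrix are linearly independent, which forces every $(k+1)$-subset to be a minimal dependent set. Once this is noted, the hypothesis $|T|\leq k$ for every $T\in\mathcal{T}$ shows that no $T$ can contain a circuit, whence $\mathcal{C}\cap\mathcal{T}=\emptyset$, and Part~2 gives $Q^\mathcal{T}=\mathbb{F}^n$.

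There is no substantial obstacle here; the only point that requires a line of argument beyond invoking Lemma~\ref{lemma: lift given by colluding circuits} is recalling the circuit structure of an MDS code in Part~3. Everything else is a direct translation between lifts and circuits via the dual-vector description from Lemma~\ref{def: lift via dual code}.
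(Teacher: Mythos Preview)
Your proposal is correct and matches the paper's intended approach: the corollary is stated without proof in the paper, precisely because all three parts are immediate consequences of Lemma~\ref{lemma: lift given by colluding circuits} (together with the identity $Q=Q^{\mathcal{C}}$ established therein) and the standard fact that an $[n,k]$ MDS code has the uniform matroid $U_{k,n}$.
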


\begin{definition}
We call the circuit vectors supported on $\T \cap \mathcal{C}$ \emph{observed} circuit vectors.
\end{definition}

Let $Q$ be a linear code over $\mathbb{F}$ and $\mathcal{T}$ a collusion pattern. We will now show how to construct the lift $Q^\mathcal{T}$ by identifying a basis of the dual $(Q^\mathcal{T})^\perp$. By Lemma~\ref{lemma: lift given by colluding circuits}, we can restrict our attention to the observed circuit vectors.

Fix a generator matrix $G_Q$ for $Q$ and take its row-reduced restriction on $T$, which we denote by $(G_Q)_{|T}$. If $(G_Q)_{|T}\equiv I_{|T|}$, then $Q^T=\mathbb{F}^{|T|}$. On the other hand, if $(G_Q)_{|T}\not\equiv I_{|T|}$, then there exists a vector in $Q^{\perp}$ supported on $T$. In particular, if $(G_Q)_{|T}\equiv(I_{|T|-1}|v)$, where $v$ is a column with non-zero entries, then $T$ corresponds to the support of a circuit vector, and the exact coordinates of this circuit vector are given by $v$.

We are now ready to describe an algorithm for constructing $Q^\mathcal{T}$.

\begin{proposition}\label{prop: L is lift}
	Let $Q$ be a code over $\mathbb{F}$ and $\mathcal{T}$ a collusion pattern. The lift $Q^\mathcal{T}$ can be constructed using the following algorithm:
	\begin{enumerate}
	\item Identify the set $\mathcal{V}$ of observed circuit vectors using restrictions of $G_Q$ on each colluding set;
	\item Form a matrix $H$ whose rows are the circuit vectors in $\mathcal{V}$;
	\item Row reduce and bring $H$ to the systematic form without changing the labels of the columns (although their order might change).\label{step: index}
	\item Produce the parity check matrix $G$ of $H$ and permute its columns, so that their order corresponds to the index in Step~\ref{step: index}. The matrix $G$ is a generator matrix of $Q^\mathcal{T}$.
\end{enumerate}
\end{proposition}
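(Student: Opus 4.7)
The plan is to show that the rows of $H$ span the dual code $(Q^\mathcal{T})^\perp$, after which the final two steps of the algorithm yield a generator matrix for $Q^\mathcal{T}$ by the standard duality between a code and its parity-check matrix.

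First, I would invoke the second equivalent form of Definition~\ref{def: lift}: $x\in Q^\mathcal{T}$ if and only if $x\cdot v=0$ for every $v\in Q^\perp$ with $\supp(v)\subseteq T$ for some $T\in\mathcal{T}$. Since $Q^\mathcal{T}$ is linear, this is equivalent to requiring $x$ to be orthogonal to the linear span $\sum_{T\in\mathcal{T}} Q^\perp(T)$, where $Q^\perp(T):=\{v\in Q^\perp:\supp(v)\subseteq T\}$ as in the proof of Lemma~\ref{def: lift via dual code}. Hence $(Q^\mathcal{T})^\perp = \sum_{T\in\mathcal{T}} Q^\perp(T)$.

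The crux is to show that this sum is already spanned by the observed circuit vectors, i.e., the circuit vectors of $Q^\perp$ whose supports are circuits of $M(Q)$ contained in some $T\in\mathcal{T}$. For fixed $T$, the subspace $Q^\perp(T)$ is, up to the natural embedding into $\F^n$, the dual code $(Q|_T)^\perp$, whose matroid is the restriction $M(Q)|T$ with circuits equal to the circuits of $M(Q)$ contained in $T$. So it suffices to verify the standard fact that any dual code is spanned by its circuit vectors. I would argue this by induction on the support size: any nonzero $v\in(Q|_T)^\perp$ of minimal support has support equal to a minimal dependent set, and hence a circuit; while if $v$ has non-circuit support $S$, one can pick a circuit $S'\subsetneq S$, its circuit vector $v'$, and a coordinate $i\in S'$, and form $v-(v_i/v'_i)v'$ to obtain an element of $(Q|_T)^\perp$ with support strictly contained in $S$. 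Summing this spanning statement over $T\in\mathcal{T}$ shows that the set $\mathcal{V}$ of observed circuit vectors spans $(Q^\mathcal{T})^\perp$.

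It follows that the rows of $H$ are a (possibly redundant) spanning set for $(Q^\mathcal{T})^\perp$, and row-reduction preserves this row span. A parity-check matrix $G$ of the row-reduced $H$ then has row span equal to the orthogonal complement of $\mathrm{rowsp}(H)$, which is exactly $((Q^\mathcal{T})^\perp)^\perp = Q^\mathcal{T}$. The only bookkeeping issue is the column-permutation in Step~\ref{step: index}: since the final step permutes the columns of $G$ back into the original order, the resulting code sits on the original coordinate set and agrees with $Q^\mathcal{T}$ coordinate-for-coordinate rather than up to a symmetry. The main conceptual obstacle is the spanning claim in the third paragraph; once that is in hand, the rest is mechanical linear algebra.
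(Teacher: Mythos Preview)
Your proof is correct and follows essentially the same line as the paper: both arguments amount to showing that the row space of $H$ equals $(Q^\mathcal{T})^\perp$, after which Steps~3--4 are routine linear algebra. The only difference is that the paper simply cites Lemma~\ref{lemma: lift given by colluding circuits} to pass from $\mathcal{T}$ to $\mathcal{T}\cap\mathcal{C}$, whereas you re-prove the underlying spanning fact directly by the support-induction argument on each $Q^\perp(T)$.
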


\begin{proof}
	The matrix $G$ is constructed to be a generator matrix of some vector space $V$, so it is unique up to elementary row operations. By construction, $V$ is generated by the circuit vectors in $\mathcal{C}\cap\mathcal{T}$, so, by Lemma~\ref{lemma: lift given by colluding circuits}, we have $V=Q^\mathcal{T}$.
\end{proof}

We will now illustrate the algorithm with an example.

\begin{example}\label{example: construction of lift}
	Let $Q$ be the Reed-Solomon $RS_7[7,3]$ code whose canonical generator matrix is
	\[
	G_Q = 
	\begin{bmatrix}
	1	&1	&1	&1 &1 &1 &1\\
	0	&1	&2	&3 &4 &5 &6\\
	0	&1	&4	&2 &2 &4 &1\\
	\end{bmatrix}
	.\]

	Let the maximal elements of $\mathcal{T}$ be $\{\{1,2,3,4\},\{2,3,5,6\},\{4,5,6,7\}\}$.
	Then the collection of observed circuit vectors is 
	\[\mathcal{V}=\{x_1+4x_2+3x_3=x_4,\quad x_2+5x_3+2x_5=x_6, \quad x_4+4x_5+3x_6=x_7\},\]
	which corresponds to matrix
	\[
	H=
	\begin{blockarray}{ccccccc}
	i & ii & iii & iv & v & vi & vii \\
	\begin{block}{[ccccccc]}
	1	&4	&3	&6 &0 &0 &0\\
	0	&1	&5	&0 &2 &6 &0\\
	0	&0	&0	&1 &4 &3 &6\\
	\end{block}
	\end{blockarray}
	\equiv  
	\begin{blockarray}{ccccccc}
	i & ii & iv & iii & v & vi & vii \\
	\begin{block}{[ccccccc]}
	1	&0	&0	&4 &3 &0 &6\\
	0	&1	&0	&5 &2 &6 &0\\
	0	&0	&1	&0 &4 &3 &6\\
	\end{block}
	\end{blockarray}
	.\]
This gives a generator matrix for $Q^\mathcal{T}$:
	\[
	G = 
	\begin{blockarray}{ccccccc}
	i & ii & iv & iii & v & vi & vii \\
	\begin{block}{[ccccccc]}
	3	&2	&0	&1 &0 &0 &0\\
	4	&5	&3	&0 &1 &0 &0\\
	0	&1	&4	&0 &0 &1 &0\\
	1	&0	&1	&0 &0 &0 &1\\
	\end{block}
	\end{blockarray}
	\equiv \begin{blockarray}{ccccccc}
	i & ii & iii & iv & v & vi & vii \\
	\begin{block}{[ccccccc]}
	3	&2	&1	&0 &0 &0 &0\\
	4	&5	&0	&3 &1 &0 &0\\
	0	&1	&0	&4 &0 &1 &0\\
	1	&0	&0	&1 &0 &0 &1\\
	\end{block}
	\end{blockarray}.\]
\end{example}


\section{Invariance with respect to derived matroid} 
\label{sec: invariance wrt to derived matroid}

Let $Q$ be a linear code of length $n$ and dimension $t-1$ over $\F$ and $\T$ a collusion pattern. By Corollary \ref{cor:connected}, the lifted code $Q^\T$ is the direct sum of the lifts over all the connected components of $\T$, together with a copy of $\F$ for each that is not contained in any colluding set. Moreover, by Lemma~\ref{lemma: lift given by colluding circuits}, the collusion pattern $\T$ can be replaced by $\T\cap \mathcal{C}$, the largest elements of which have cardinality one larger than the dimension of the code $Q$.
Thus, in order to understand lifts over arbitrary collusion patterns, we will henceforth restrict attention to patterns such that
\begin{enumerate}
    \item $\T$ is connected, i.e. for every maximal colluding set $T_i$ in $\T$, there is another maximal $T_j$ in $\T$ such that $T_i \cap T_j \neq \emptyset$;
    \item for every $i \in [n]$, there exists a colluding set $T$ such that $\{i\}\subsetneq T$;
    \item every maximal colluding set $T$ in $\T$ has cardinality at most $t$;

\end{enumerate}

\begin{proposition}  \label{prop: no collusion equivalence}
Let $\mathcal{T}$ and $\mathcal{T}'$ be two different collusion patterns. Then there exists a code $Q$ such that
\[Q^\mathcal{T}\neq Q^{\mathcal{T}'}.\]

\end{proposition}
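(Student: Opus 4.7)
The plan is to construct, for any distinct pair $\mathcal{T}\neq\mathcal{T}'$, a single code $Q$ whose matroid has only one circuit, chosen to lie in exactly one of the two patterns. By the symmetry of the claim, I may assume without loss of generality that there exists $T\in\mathcal{T}'\setminus\mathcal{T}$ (swapping the names of the two patterns if necessary).

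The key structural observation is that a simplicial complex is closed under taking subsets, so by contrapositive the condition $T\notin\mathcal{T}$ forces every superset of $T$ to lie outside $\mathcal{T}$. In particular, no element of $\mathcal{T}$ contains $T$. Then I would pick any nonzero $v\in\F^n$ with $\supp(v)=T$ (for instance the indicator vector of $T$ over $\F=\F_2$) and set $Q:=v^\perp$. Since $Q^\perp=\langle v\rangle$, the matroid $M(Q)$ has the single circuit $\mathcal{C}=\{T\}$.

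Finally I would invoke the two parts of Corollary~\ref{cor: elementary properties}. Since $\mathcal{C}=\{T\}\subseteq\mathcal{T}'$, part~(1) yields $Q^{\mathcal{T}'}=Q$, which is a proper hyperplane in $\F^n$. On the other hand, the observation above gives $\mathcal{C}\cap\mathcal{T}=\emptyset$, so part~(2) gives $Q^\mathcal{T}=\F^n$. These are unequal, establishing the claim. The only delicate step is the contrapositive that upgrades ``$T\notin\mathcal{T}$'' to ``no $T^*\in\mathcal{T}$ contains $T$''; once that is in hand, the single-circuit code $v^\perp$ is essentially the minimal possible witness and no further obstacle arises.
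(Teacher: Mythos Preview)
Your proof is correct and is essentially the paper's own argument: pick $T$ in one pattern but not the other, and take $Q$ to be a code whose matroid has $T$ as its unique circuit, so that Corollary~\ref{cor: elementary properties} gives $Q^{\mathcal{T}'}=Q\subsetneq\F^n=Q^{\mathcal{T}}$. Your ``delicate step'' about supersets is in fact unnecessary, since Corollary~\ref{cor: elementary properties}(2) only requires $\mathcal{C}\cap\mathcal{T}=\{T\}\cap\mathcal{T}=\emptyset$, which is immediate from $T\notin\mathcal{T}$.
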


\begin{proof}
    Take $T\in \mathcal{T}-\mathcal{T}'$. Choose $Q\subsetneq \mathbb{F}_q^n$ so that $T$ is the only circuit in the matroid $M(Q)$. Then, by Corollary~\ref{cor: elementary properties},  $Q^{\mathcal{T}'}=\mathbb{F}_q^n$ but $Q^{\mathcal{T}}=Q$.
\end{proof}

\begin{corollary}
Let $\mathcal{T}=\{T\subseteq E:|T|\leq t\}$. There does not exist a collusion pattern $\mathcal{T}'\subsetneq \mathcal{T}$ such that
\[Q^{\mathcal{T}'}=Q^{\mathcal{T}}\quad \forall Q.\]
\end{corollary}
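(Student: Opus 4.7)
The statement is a nearly immediate specialisation of Proposition~\ref{prop: no collusion equivalence}. My plan is to argue by contrapositive: fix an arbitrary $\mathcal{T}'\subsetneq\mathcal{T}$ and exhibit a single code $Q$ for which $Q^{\mathcal{T}'}\neq Q^{\mathcal{T}}$. Since a strict subcomplex $\mathcal{T}'\subsetneq\mathcal{T}$ is in particular a distinct collusion pattern from $\mathcal{T}$, applying Proposition~\ref{prop: no collusion equivalence} to the pair $(\mathcal{T},\mathcal{T}')$ instantly produces such a $Q$, which contradicts the hypothesis that $Q^{\mathcal{T}'}=Q^{\mathcal{T}}$ for all $Q$.

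For concreteness, one can unpack the construction already used in the proof of the proposition: pick any $T\in\mathcal{T}\setminus\mathcal{T}'$ (automatically with $|T|\leq t$ by definition of $\mathcal{T}$), and choose a code $Q$ of length $n$ and dimension $n-1$ whose one-dimensional dual is spanned by a nonzero vector supported exactly on $T$. Then the matroid $M(Q)$ has $\{T\}$ as its unique circuit, so $\mathcal{C}(Q)\cap\mathcal{T}'=\emptyset$ while $T\in\mathcal{C}(Q)\cap\mathcal{T}$. Corollary~\ref{cor: elementary properties}(\ref{lift is field}) then gives $Q^{\mathcal{T}'}=\mathbb{F}^n$, whereas part~1 of the same corollary gives $Q^{\mathcal{T}}=Q\subsetneq\mathbb{F}^n$, separating the two lifts as required.

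There is essentially no obstacle. In fact, the hypothesis that $\mathcal{T}$ is the $t$-collusion is never used — the same argument shows that no collusion pattern on $[n]$ admits a strict sub-pattern with identical lifting behaviour across all codes, and the stated corollary is simply the $t$-collusion instance, singled out because it is the natural maximal case in the present context.
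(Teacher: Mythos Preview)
Your proposal is correct and matches the paper's approach exactly: the corollary is stated in the paper without proof, as an immediate consequence of Proposition~\ref{prop: no collusion equivalence}, and your argument simply unpacks that deduction (including the same single-circuit construction used in the proposition's proof). Your observation that the $t$-collusion hypothesis is inessential is also accurate.
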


Furthermore, it is not enough to consider the matroid of $Q$ as we illustrate in Example~\ref{counterexample}.

\begin{example}
\label{counterexample}
	Let $Q_1$ and $Q_2$ be two $[6,3]$ MDS codes over $\mathbb{F}_7$ as follows. A generator matrix for the dual code $Q_1^\perp$ is 
	\[G_{Q_1^\perp}=
	\begin{bmatrix}
	1 & 2 & 1 & 5 & 0 & 0\\
	1 & 5 & 0 & 0 & 5 & 1\\
	0 & 0 & 5 & 1 & 2 & 1
	\end{bmatrix}
	\]
	and a generator matrix for the dual code $Q_2^\perp$ is
	\[G_{Q_2^\perp}=
	\begin{bmatrix}
	1 & 1 & 1 & 1 & 0 & 0\\
	0 & 0 & 1 & 1 & 1 & 1\\
	1 & 2 & 3 & 4 & 5 & 6
	\end{bmatrix}
	.\]
	Hence, $M(Q_1)=M(Q_2)$. Assume the collusion pattern is
	\[\mathcal{T}=\{1234,1256,3456\}.\] 
	
	In $Q_1$, the unique dual codewords supported on the facets of $\mathcal{T}$ are 
	\[\{(1,2,1,5,0,0), (1,5,0,0,5,1), (0,0,5,1,2,1)\}\] 
	and they form a basis of $Q_1^{\perp}$, so $Q_1=Q_1^\mathcal{T}$. Observe that uniqueness follows from the fact that the facets of $\mathcal{T}$ are circuits in $M(Q_1)$.
	
	On the other hand, in $Q_2$, the corresponding codewords  are
	\[\{(1,1,1,1,0,0), (0,0,1,1,1,1), (1,1,0,0,6,6)\},\] 
	where 
	\[(1,1,0,0,6,6)=(1,1,1,1,0,0)-(0,0,1,1,1,1),\]
	so $Q_2\subsetneq Q_2^\mathcal{T}$ 
	and uniqueness again follows from the fact that the facets are circuits in $M(Q_2)$. 
\end{example}

At the same time, Example~\ref{counterexample} suggests that the matroid of the lift $Q^\T$ is determined by the derived matroid of $Q$. We will make this precise in Theorem~\ref{thm:matroid invariance}.

\begin{definition}
Let $Q_1$ and $Q_2$ be two different linear codes such that $M(Q_1)=M(Q_2)$.
A collusion pattern $\mathcal{T}$ \emph{separates} $Q_1$ and $Q_2$ if $M(Q_1^\mathcal{T})\neq M(Q_2^\mathcal{T})$.
\end{definition}

\begin{lemma}[Separating collusion patterns]\label{lemma: construction of sep collusion patterns}
Let $Q_1$ and $Q_2$ be two different linear codes such that $M(Q_1)=M(Q_2)$ but $\delta M(Q_1)\neq\delta M(Q_2)$. Then there exists a non-empty set of separating collusion patterns 
\[\{\mathcal{T}: M(Q_1^\mathcal{T})\neq M(Q_2^\mathcal{T})\}.\]
\end{lemma}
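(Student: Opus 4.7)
The plan is to exhibit a set $\mathcal{D}^*\subseteq \mathcal{C}$ that is a circuit in exactly one of the derived matroids and independent in the other, and then to use $\mathcal{D}^*$ itself to build a separating collusion pattern whose lifts have different dimensions over $Q_1$ and $Q_2$.

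Since $M(Q_1)=M(Q_2)=:M$, both $\delta M(Q_1)$ and $\delta M(Q_2)$ have the common circuit set $\mathcal{C}$ of $M$ as their ground set. Assuming $\delta M(Q_1)\neq \delta M(Q_2)$, their families of circuits differ, so there is a set $\mathcal{D}\subseteq \mathcal{C}$ that is a circuit in one of them, say $\delta M(Q_1)$, but not in $\delta M(Q_2)$. I would then locate the desired $\mathcal{D}^*$ by a short case analysis: if $\mathcal{D}$ is independent in $\delta M(Q_2)$, set $\mathcal{D}^*=\mathcal{D}$; otherwise $\mathcal{D}$ contains a $\delta M(Q_2)$-circuit $\mathcal{D}^*\subsetneq \mathcal{D}$, which is automatically $\delta M(Q_1)$-independent because it is a proper subset of the $\delta M(Q_1)$-circuit $\mathcal{D}$.

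Next, I would let $\T^*:=\langle C : C\in \mathcal{D}^*\rangle$, the simplicial complex whose facets are the circuits of $M$ constituting $\mathcal{D}^*$. Because distinct circuits of a matroid are incomparable and every proper subset of a circuit is independent, the only circuits of $M$ contained in a facet of $\T^*$ are the elements of $\mathcal{D}^*$ themselves; hence $\T^*\cap \mathcal{C}=\mathcal{D}^*$. Applying Lemma~\ref{lemma: lift given by colluding circuits} together with the construction in Proposition~\ref{prop: L is lift}, the dual code $(Q_i^{\T^*})^\perp$ is precisely the span of the circuit vectors of $Q_i^\perp$ indexed by $\mathcal{D}^*$, and so its dimension equals the rank of $\mathcal{D}^*$ in $\delta M(Q_i)$.

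By construction, this rank equals $|\mathcal{D}^*|$ for the derived matroid in which $\mathcal{D}^*$ is independent and $|\mathcal{D}^*|-1$ for the one in which it is a circuit. Hence $\dim Q_1^{\T^*}\neq \dim Q_2^{\T^*}$, which forces $M(Q_1^{\T^*})\neq M(Q_2^{\T^*})$, so $\T^*$ separates $Q_1$ and $Q_2$. The one subtlety that requires care is the case distinction guaranteeing a circuit-versus-independent contrast rather than merely a dependent-versus-independent one; without this precision the difference in ranks of $\mathcal{D}^*$ across the two derived matroids need not be exactly one, and the matroid inequality of the lifts would not follow purely from a dimension count.
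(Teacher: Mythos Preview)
Your argument is correct and follows the same line as the paper's: choose a set of circuits whose rank differs in the two derived matroids, take it as (the facets of) the collusion pattern, and read off that the dual lifts have different dimensions. The paper simply picks any $\mathcal{S}\subseteq\mathcal{C}$ that is independent in $\delta M(Q_1)$ and dependent in $\delta M(Q_2)$ and sets $\mathcal{T}=\mathcal{S}$, without your refinement to a circuit.

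One remark on your closing paragraph: the case distinction you highlight is not actually needed. If $\mathcal{D}^*$ is independent in one derived matroid and merely dependent (not necessarily a circuit) in the other, the ranks already differ, since an independent set has rank $|\mathcal{D}^*|$ while any dependent set has rank strictly less. A rank difference of any size forces $\dim Q_1^{\T^*}\neq\dim Q_2^{\T^*}$ and hence $M(Q_1^{\T^*})\neq M(Q_2^{\T^*})$. So your refinement is harmless but superfluous; the ``subtlety'' you flag does not arise.
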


\begin{proof}
    Since the two codes have different derived matroids, there exists a set $\mathcal{S}$ of circuits that is independent in $\delta M(Q_1)$ but dependent in $\delta M(Q_2)$. Choosing $\mathcal{T}=\mathcal{S}$ gives a separating collusion pattern.
\end{proof}

\begin{question}\label{question: separating collusion patterns}
Let $Q_1$ and $Q_2$ be two different linear codes with $M(Q_1)=M(Q_2)$ but $\delta M(Q_1)\neq\delta M(Q_2)$.
Is there a combinatorial characterisation of the set of separating collusion patterns \[\{\mathcal{T}: M(Q_1^\mathcal{T})\neq M(Q_2^\mathcal{T})\}?\]
\end{question}
The following lemma uses the derived matroid to establish the relationship between the matroid of the code and the matroid of the lift.
\begin{lemma}\label{lemma: matroid of lift is a flat}
Let $Q$ be a code over $\mathbb{F}$ and $\mathcal{T}$ a collusion pattern. Then
\[M(Q^\mathcal{T})\cap M(Q) =\mathcal{F}(\mathcal{S}),\]
where $\mathcal{S}\subseteq \mathcal{C}\cap \mathcal{T}$ is a maximal subset of colluding sets that is independent in the derived matroid $\delta M(Q)$ and $\mathcal{F}(\mathcal{S})$ is the flat of $\mathcal{S}$ in $\delta M(Q)$.
\end{lemma}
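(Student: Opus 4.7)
The plan is to translate the circuit-theoretic equality on the left into a linear-algebra statement about the dual code, and then re-interpret that statement as closure in the derived matroid. Throughout, for each $C\in\mathcal{C}$ I will write $v_C\in Q^\perp$ for its (unique up to scalar) circuit vector.

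The first step is to show that for $C\in\mathcal{C}$, we have $C\in M(Q^\T)\cap M(Q)$ if and only if $v_C\in (Q^\T)^\perp$. Since $Q\subseteq Q^\T$ by the definition of the lift, we have $(Q^\T)^\perp\subseteq Q^\perp$. For the ``if'' direction, $v_C\in (Q^\T)^\perp$ shows that $C$ is dependent in $M(Q^\T)$, and any $C'\subsetneq C$ dependent in $M(Q^\T)$ would give a nonzero $w\in (Q^\T)^\perp\subseteq Q^\perp$ with $\supp(w)\subseteq C'$, contradicting the minimality of $C$ in $M(Q)$. For the ``only if'' direction, if $C$ is a circuit of $M(Q^\T)$, then its circuit vector lies in $(Q^\T)^\perp\subseteq Q^\perp$ and has support $C$, so by uniqueness of the circuit vector in $Q^\perp$ it must be a scalar multiple of $v_C$, and in particular $v_C\in (Q^\T)^\perp$. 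Hence $M(Q^\T)\cap M(Q)=\{C\in\mathcal{C} : v_C\in (Q^\T)^\perp\}$.

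The second step is to invoke Proposition~\ref{prop: L is lift} (or equivalently Lemma~\ref{lemma: lift given by colluding circuits}), which implies that $(Q^\T)^\perp$ is spanned by the observed circuit vectors $\{v_{C'} : C'\in\mathcal{C}\cap\mathcal{T}\}$. Combining this with the characterization above, $C\in M(Q^\T)\cap M(Q)$ precisely when $v_C$ lies in the $\mathbb{F}$-linear span of $\{v_{C'} : C'\in\mathcal{C}\cap\mathcal{T}\}$. By the definition of $\delta M(Q)$ recalled in Section~\ref{sec2:combinatorial prelims}, linear dependencies among circuit vectors in $Q^\perp$ are exactly the dependencies of $\delta M(Q)$, so this is equivalent to $C\in \operatorname{cl}_{\delta M(Q)}(\mathcal{C}\cap\mathcal{T})$. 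Finally, since $\mathcal{S}\subseteq \mathcal{C}\cap\mathcal{T}$ is a maximal subset independent in $\delta M(Q)$, it is a basis of $\mathcal{C}\cap\mathcal{T}$ in the derived matroid, so its closure coincides with $\operatorname{cl}_{\delta M(Q)}(\mathcal{C}\cap\mathcal{T})$, which is the flat $\mathcal{F}(\mathcal{S})$.

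I expect the only subtle point to be the first step: arguing that the circuit structure transfers between $M(Q)$ and $M(Q^\T)$ whenever the circuit vector transfers. This uses both the inclusion $(Q^\T)^\perp\subseteq Q^\perp$ and the uniqueness (up to scalar) of circuit vectors. Once this is in hand, the remainder is a direct unwinding of the definitions of the derived matroid, its closure operator, and the flat spanned by a basis.
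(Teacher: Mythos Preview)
Your proof is correct and follows essentially the same approach as the paper's: both arguments identify $(Q^\T)^\perp$ with the span of the observed circuit vectors (via Proposition~\ref{prop: L is lift}) and then reinterpret membership in that span as closure in $\delta M(Q)$. Your version is in fact slightly more careful than the paper's on the point you flagged---namely, why a circuit $C\in\mathcal{C}$ with $v_C\in(Q^\T)^\perp$ is not just dependent but actually a \emph{circuit} in $M(Q^\T)$---which the paper's proof leaves implicit.
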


\begin{proof}
Recall from Proposition~\ref{prop: L is lift} that $Q^\mathcal{T}$ is generated by the circuit vectors corresponding to the colluding circuits $\mathcal{C}\cap \mathcal{T}$. Therefore, every circuit vector of $M(Q^\mathcal{T})\cap M(Q)$ is an element of $\mathcal{F}(\mathcal{C}\cap \mathcal{T})$. But 
\[\mathcal{C}\cap \mathcal{T} \subseteq \mathcal{F}(\mathcal{S}),\]
so $M(Q^\mathcal{T})\cap M(Q)\subseteq\mathcal{F}(\mathcal{S})$.

On the other hand, if $C \in \mathcal{F}(\mathcal{S})$, then any codeword in $Q^\mathcal{T}$ supported on $C$ should satisfy the linear equation given by its circuit vector. Hence, $C \in M(Q^\mathcal{T})$.
\end{proof}

We are now ready to show that the operation of taking a lift is invariant with respect to the equivalence class of derived matroids.

\begin{theorem}\label{thm:matroid invariance}
	Let $Q_1$ and $Q_2$ be two different linear codes with $M(Q_1)=M(Q_2)$ 
	Then $M(Q_1^\mathcal{T})=M(Q_2^\mathcal{T})$ for any connected collusion pattern $\mathcal{T}$ if and only if $\delta M(Q_1)=\delta M(Q_2)$.
\end{theorem}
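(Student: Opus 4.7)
The plan is to prove the two implications of the equivalence separately; the reverse direction $(\Leftarrow)$ requires more work.

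For the forward direction $(\Rightarrow)$, I argue by contrapositive, refining Lemma~\ref{lemma: construction of sep collusion patterns}. If $\delta M(Q_1)\neq\delta M(Q_2)$, some subset $\mathcal{S}\subseteq\mathcal{C}$ is independent in one and dependent in the other; by passing to a minimal such witness I may assume $\mathcal{S}$ is independent in $\delta M(Q_1)$ and a circuit in $\delta M(Q_2)$. Taking $\mathcal{T}:=\langle\mathcal{S}\rangle$, I first verify that $\mathcal{T}$ is connected: the unique (up to scaling) dependency $\sum_{C\in\mathcal{S}}\alpha_C v_C^{(2)}=0$ witnessing the circuit must involve all of $\mathcal{S}$, and were the hypergraph on $\mathcal{S}$ with vertex set $\bigcup_{C\in\mathcal{S}}C$ disconnected, the dependency would split along its components, contradicting the uniqueness. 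Since circuits of $M(Q)$ form an antichain, $\mathcal{C}\cap\mathcal{T}=\mathcal{S}$, and Proposition~\ref{prop: L is lift} gives $\dim(Q_i^\mathcal{T})^\perp=r_{\delta M(Q_i)}(\mathcal{S})$, equal to $|\mathcal{S}|$ for $i=1$ and $|\mathcal{S}|-1$ for $i=2$. Hence the lifts have different ranks and so different matroids.

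For the reverse direction $(\Leftarrow)$, assume $\delta M(Q_1)=\delta M(Q_2)$ and let $\mathcal{T}$ be an arbitrary collusion pattern. By matroid duality it suffices to prove $M((Q_1^\mathcal{T})^\perp)=M((Q_2^\mathcal{T})^\perp)$, which I do by computing rank functions. For any $A\subseteq[n]$, setting $B:=[n]\setminus A$,
\[
r_{M((Q_i^\mathcal{T})^\perp)}(A) \;=\; \dim(Q_i^\mathcal{T})^\perp - \dim\bigl((Q_i^\mathcal{T})^\perp\cap\F^B\bigr).
\]
The first summand equals $r_{\delta M(Q_i)}(\mathcal{C}\cap\mathcal{T})$ and so is invariant under the hypothesis. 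For the second, I first establish the auxiliary span identity $Q_i^\perp\cap\F^B=\operatorname{span}\{v_C^{(i)}:C\in\mathcal{C},\,C\subseteq B\}$ by induction on support size: any nonzero codeword in $Q_i^\perp\cap\F^B$ has support containing some circuit $C\subseteq B$, and subtracting an appropriate multiple of $v_C^{(i)}$ strictly shrinks the support. Granted this identity, $(Q_i^\mathcal{T})^\perp\cap\F^B$ is the intersection of two spans of circuit vectors, and the Grassmann dimension formula yields
\[
\dim\bigl((Q_i^\mathcal{T})^\perp\cap\F^B\bigr) = r_{\delta M(Q_i)}(\mathcal{C}\cap\mathcal{T})+r_{\delta M(Q_i)}(\mathcal{C}_B)-r_{\delta M(Q_i)}\bigl((\mathcal{C}\cap\mathcal{T})\cup\mathcal{C}_B\bigr),
\]
where $\mathcal{C}_B:=\{C\in\mathcal{C}:C\subseteq B\}$. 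Every quantity on the right is an invariant of $\delta M(Q_i)$, so the left-hand side agrees for $i=1,2$, completing the rank computation.

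The main obstacle is the auxiliary span identity for $Q^\perp\cap\F^B$: it is not stated in the preliminaries and must be proved as a lemma. It is a special case of the general principle that any linear code is spanned by its minimum-support codewords, which in our setting are precisely the circuit vectors; applying this to the shortening of $Q^\perp$ at $[n]\setminus B$ gives the claim. A secondary delicate point is the connectedness assertion in the forward direction, which depends both on circuits forming an antichain (to ensure $\mathcal{C}\cap\mathcal{T}=\mathcal{S}$ and that no extra observed circuits spoil the rank comparison) and on the uniqueness of the circuit dependency (to force geometric connectedness of the underlying hypergraph).
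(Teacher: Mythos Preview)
Your proof is correct, and in fact the forward direction is more carefully argued than the paper's own: the paper simply cites Lemma~\ref{lemma: construction of sep collusion patterns} without checking that the separating pattern can be taken connected, whereas you obtain connectedness by shrinking to a circuit of $\delta M(Q_2)$ and observing that a linear dependency with all coefficients nonzero cannot split over disjoint supports.

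The reverse direction takes a genuinely different route. The paper argues circuit-by-circuit: via Lemma~\ref{lemma: matroid of lift is a flat} it identifies $M(Q_i^{\mathcal{T}})\cap M(Q_i)$ with the flat $\mathcal{F}(\mathcal{S})$ in $\delta M(Q_i)$, and then handles the remaining circuits of $M(Q_1^{\mathcal{T}})$ (those that are dependent but not circuits in $M$) by a somewhat intricate contradiction involving an auxiliary maximal independent set $\Sigma$ of sub-circuits. Your approach is cleaner and more uniform: you compute the rank function of $M((Q_i^{\mathcal{T}})^\perp)$ directly, reducing every term to a rank in $\delta M(Q_i)$ via the Grassmann formula and the span identity $Q^\perp\cap\F^B=\operatorname{span}\{v_C:C\subseteq B\}$. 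This makes the mechanism transparent---every relevant dimension is literally a rank in the derived matroid---and avoids the case distinction between circuits that persist from $M$ and those that do not. The paper's argument, by contrast, stays closer to the combinatorics of circuits and flats, which may be preferable if one wants to extract the explicit description in Lemma~\ref{lemma: matroid of lift is a flat} as a byproduct; your argument subsumes that lemma but does not isolate it.
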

\begin{proof}
	The fact that the derived matroids need to be equal follows from Lemma~\ref{lemma: construction of sep collusion patterns}.
    
    For the other direction, let $\mathcal{S}$ be a largest independent set of $\delta M(Q_1)$ contained in $\mathcal{T}$. Let $\mathcal{F}_1(\mathcal{S})$ be the flat of $\mathcal{S}$ in $\delta M(Q_1)$. Then by Lemma~\ref{lemma: matroid of lift is a flat}, $M(Q_1^\mathcal{T})\cap M(Q_1)=\mathcal{F}_1(\mathcal{S})$. By assumption, the set $\mathcal{S}$ is also a largest independent set of $\delta M(Q_2)$ contained in $\mathcal{T}$ and $\mathcal{F}_1(\mathcal{S})=\mathcal{F}_2(\mathcal{S})$. Therefore, $M(Q_1^\mathcal{T})\cap M(Q_1)=M(Q_2^\mathcal{T})\cap M(Q_2)$.
    
    Let $C^*$ be a circuit in $M(Q_1^\mathcal{T})- M(Q_1)$. This means that $C^*$ is a dependent set in $M(Q_1)$ but every circuit of $M(Q_1)$ contained in $C^*$ is an independent set in $M(Q_1^\mathcal{T})$. Therefore, if $C \subsetneq C^*$ is a circuit of $M(Q_1)$, then $C \notin \mathcal{F}_1(\mathcal{S})$. But then $C \notin \mathcal{F}_2(\mathcal{S})$, so it is an independent set in $M(Q_2^\mathcal{T})$.

   For contradiction, assume that $C^*$ is an independent set in $M(Q_2^\mathcal{T})$. For a fixed code $Q$, denote by $v(S)$ a dual vector supported on the subset $S$ of the coordinates. By assumption, there exists a dual vector $v(C^*) \in Q_1^\perp$ such that 
    \[v(C^*)=\sum_{C \in \mathcal{C}\cap \mathcal{T}}\alpha_Cv(C).\]
    By construction, each $v(C)$ is unique up to scalar, whenever $C\in \mathcal{C}\cap \mathcal{T}$. Furthermore, $v(C^*)$ is also unique up to scalar since it becomes a circuit vector in $(Q_1^\mathcal{T})^\perp$. Without loss of generality, we can assume that all $\alpha_c$ are non-zero, i.e., $\mathcal{C}\cap \mathcal{T}=\mathcal{S}$.

    Among the circuits $C \subsetneq C^*$ in $M(Q_1)$, choose a collection $\Sigma$ such that  $(\mathcal{C}\cap \mathcal{T}) \cup \Sigma$ is an independent set of maximal rank in $\delta M(Q_1)$. Consider a dual vector of the form
    \[v=\beta v(C^*)+\sum_{C \in \Sigma}\gamma_Cv(C).\]
    If not all $\beta$ and $\gamma_c$ are zero, then $v$ is a non-zero vector. 
 Assume that $v$ is chosen to have minimal support and let the set $X$ be its support. Since $v$ is a linear combination of circuit vectors, $X$ is a dependent set in $M(Q_1)$. Furthermore, it is a circuit because if $C'\subsetneq X \subsetneq C^*$ is a circuit in $M(Q_1)$, then $(\mathcal{C}\cap \mathcal{T}) \cup \Sigma \cup \{C'\}$ is an independent set in $\delta M(Q_1)$, but this contradicts the maximality of $(\mathcal{C}\cap \mathcal{T}) \cup \Sigma$. Therefore, $(\mathcal{C}\cap \mathcal{T}) \cup \Sigma \cup \{X\}$ is a dependent set in $\delta M(Q_1)$, so it is also a dependent set in $\delta M(Q_2)$. This is a contradiction, because $C^*$ is an independent set in $M(Q_2^\mathcal{T})$, so there is no dual vector $v(C^*) \in Q_2^\perp$ such that 
    \[v(C^*)=\sum_{C \in \mathcal{C}\cap \mathcal{T}}\alpha'_Cv(C).\]
\end{proof}

\begin{question}\label{q:generic}
The property that two codes have the same matroid and the same derived matroid is an equivalence relation. The realization space of an $\mathbb{F}$-representable matroid $M$ is thus partitioned according to the derived matroids. Is it possible to characterise this partition, and is there a natural way in which one of the parts corresponds to a
``generic'' derived matroid of $M$?
\end{question}
We can think of at least two different meanings of genericity in Question~\ref{q:generic}. One interpretation is that one of the parts would have higher dimension than all the others, as varieties over $\F$. Another interpretation is that there would be a natural partial order of the derived matroids with a unique maximal element.  A related question with a more probabilistic flavour is the following:

\begin{question}
Fix a $\mathbb{F}$-representable matroid $M$ and choose two representations $Q_1$ and $Q_2$ uniformly at random. What is the probability that they will have the same derived matroid? 
\end{question}

In~\cite{jurrius2015derived}, a different but related notion of \emph{derived code} was defined. For this definition, they conjecture a positive answer to Question~\ref{q:generic}, with an order-theoretic interpretation of genericity.

\section{Collusion patterns equivalent to \texorpdfstring{$t$}--collusion}
\label{sec:t-equivalence}

Recall from Corollary~\ref{cor: elementary properties} that $Q^\T=Q$ whenever $\T$ is a $t$-collusion pattern and $\dim Q < t$. On the other hand, Example~\ref{motivating example} shows that if $Q$ is an $[n,t-1]$ MDS code, it is sufficient to take the cyclical collusion pattern given by 
\[\mathcal{T}=\langle\{i,i+1,\dots,i+t-1\}:i \in [n]\}\rangle\]
to ensure that $Q^\T=Q$. In this section we will study how much $t$-collusion can be reduced to maintain $Q^\T=Q$ for a given $Q$.

\begin{definition}
Let $Q$ be a linear code over $\F$ of dimension $t-1$ and $\T$ a proper subset of $t$-collusion. We say that $\T$ is \emph{equivalent to $t$-collusion} if $Q^\T=Q$.
\end{definition}

By Lemma~\ref{lemma: construction of sep collusion patterns}, a collusion pattern $\mathcal{T}$ is  $t$-collusion equivalent for all $\mathbb{F}$-representations of a matroid $M$ if and only if it contains a basis of every derived matroid $\delta M$ given by the $\mathbb{F}$-representations. On the other hand, Theorem~\ref{thm:matroid invariance} shows that the set of $t$-equivalent collusion patterns is invariant with respect to the derived matroid of the linear code.  

In this section we will state several sufficient conditions for $t$-equivalence. The next lemma shows that we can assume that both the matroid of the underlying code and the collusion pattern are connected.

\begin{lemma}\label{lemma: properties of derived matroid}
    \begin{enumerate}
    \item If an element $e$ is a loop in $M$, then $\delta M=U_{1,1}\oplus \delta (M\backslash e)$. If an element $e$ is a coloop in $M$, then $\delta M=\delta (M\backslash e)$. 
    \item If $\mathcal{T}$ is disconnected and $M$ is connected, then $Q \subsetneq Q^\mathcal{T}$.  
    \end{enumerate}
\end{lemma}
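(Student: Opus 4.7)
The first part splits according to whether $e$ is a loop or a coloop of $M$. Suppose $e$ is a loop. Then $\{e\}$ is itself a circuit of $M$, and by circuit minimality no other circuit of $M$ contains $e$. Its circuit vector $v(\{e\})\in Q^\perp$ is a non-zero scalar multiple of the standard basis vector on $e$, while every other circuit vector has $e$-coordinate zero. Inspecting the $e$-coordinate of any alleged linear dependence therefore forces the coefficient of $v(\{e\})$ to vanish, so $\{e\}$ is a coloop of $\delta M$. The remaining circuit vectors coincide with the circuit vectors of $M\backslash e$ obtained from the representation $Q\backslash e$, and their linear dependencies are the same, yielding $\delta M=U_{1,1}\oplus \delta(M\backslash e)$. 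If instead $e$ is a coloop of $M$, then no circuit of $M$ contains $e$, so the circuits of $M$ and $M\backslash e$ coincide; the circuit vectors are unchanged after dropping the zero $e$-coordinate, their dependencies are unaffected, and hence $\delta M=\delta(M\backslash e)$.

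For the second part, I would argue by contradiction. Write $\T=\T_1\sqcup \T_2$ with disjoint vertex sets $V_1,V_2$, and suppose $Q=Q^\T$. Corollary~\ref{cor:connected} then gives $Q=Q_1^{\T_1}\times Q_2^{\T_2}\times \F^{[n]-V_1-V_2}$, exhibiting $Q$ as a direct product of codes on pairwise disjoint coordinate sets. Taking matroids, $M$ decomposes as a direct sum of matroids supported on those disjoint ground sets; since $\T_1$ and $\T_2$ are both non-empty, $V_1$ and $V_2$ are both non-empty, so this sum has at least two non-trivial summands. This contradicts the connectedness of $M$, so necessarily $Q\subsetneq Q^\T$.

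The only point requiring genuine care is the loop case of part 1, where one must verify that the ``new'' circuit vector $v(\{e\})$ does not disrupt the dependency structure among the other circuit vectors. Once one observes that $v(\{e\})$ is uniquely concentrated on coordinate $e$ while every other circuit vector is supported on $[n]\setminus\{e\}$, the separation into a coloop direct summand is automatic, and the remainder of the argument is bookkeeping. Part 2 is essentially an immediate corollary of the structural result in Corollary~\ref{cor:connected} and the definition of matroid connectedness.
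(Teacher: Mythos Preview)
Your argument for Part~1 is essentially the same as the paper's: in the loop case you observe that the circuit vector $v(\{e\})$ is the unique one supported on coordinate $e$, hence a coloop of $\delta M$, and in the coloop case you note that $e$ lies in no circuit so $\delta M$ is literally unchanged upon deleting $e$. Your extra sentence explaining why the coefficient of $v(\{e\})$ must vanish in any dependence is a welcome clarification.

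For Part~2 you take a slightly different but equally valid route. The paper argues via circuits: connectedness of $M$ forbids coloops, so no single component $\T_i$ can contain all of $\mathcal{C}$; it then splits into the case where some $\mathcal{C}\cap\T_i$ is empty and the case where all are non-empty, invoking the product decomposition $Q^\T=\bigoplus_i Q^{\T_i}$ to conclude $Q^\T\supsetneq Q$. You instead invoke Corollary~\ref{cor:connected} directly, assume $Q=Q^\T$, and read off that $M(Q)$ splits as a direct sum over the non-empty ground sets $V_1$ and $V_2$, contradicting connectedness. The two arguments share the same core (the product formula for $Q^\T$ over a disconnected $\T$), but your packaging is cleaner: it avoids the case split and makes the role of matroid connectedness explicit at the end rather than at the start.
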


\begin{proof}
\begin{enumerate}
        \item This is \cite[Lemma 10]{oxley2019derived}. If an element $e$ is a loop, then it is a circuit. Then up to non-zero scalar multiplication, there exists a unique circuit vector whose support is $\{e\}$. Hence, $\{e\}$ does not belong to any circuit in $\delta M$, so it is a coloop. Therefore, $\delta M=U_{1,1}\oplus \delta (M\backslash e)$.
        
If an element $e$ is a co-loop, then it is not contained in any circuit. Hence, for every representation of $M$,
\[\delta M = \delta M(\mathcal{C},[n])=\delta M(\mathcal{C},[n]-\{e\})= \delta (M \backslash e).\]
     
        \item Let $Q$ be an $\mathbb{F}$-representation of $M$, and let $\{T_i\}_{i\in I}$ be the connected components of $\T$.   Since $M$ does not have any coloops, $\mathcal{C}\not \subseteq \mathcal{T}_i$ for all $i$. On the other hand, if $\mathcal{C}\cap \mathcal{T}_i$ is empty for some $i$, then $Q \subsetneq Q^\mathcal{T}$, so $\mathcal{T}$ is not $t$-collusion equivalent. Assume $\mathcal{C}\cap \mathcal{T}_i$ is non-empty for all $i$. Then 
\[Q^{\mathcal{T}}=\bigoplus_{i\in I} Q^{\mathcal{T}_i}\supsetneq Q.\]
\end{enumerate}
\end{proof}

\begin{remark}\label{remark: cardinality of basis of derived matroid}
Since the circuit vectors of an $[n,t-1,d]$ code $Q$ generate the dual code $Q^\perp$, the inclusion maximal independent sets of circuits have cardinality $n-t+1$. This is the same as saying that the rank of the derived matroid is $n-t+1$. 
\end{remark}

\begin{lemma}\label{lemma: outer conditions on circuit independence}
Let $M$ be an $\mathbb{F}$-representable matroid on the ground set $[n]$. Take any set of circuits $\mathcal{S}=\{C_1,C_2,\dots,C_m\}$ such that every circuit $C_i$ satisfies
\[C_i-\bigcup\limits_{C_j \in \mathcal{S}-\{C_i\} }C_j \neq \emptyset.\]
Then $\mathcal{S}$ is independent in the derived matroid of every $\mathbb{F}$-representation of $M$.
\end{lemma}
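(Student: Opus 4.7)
The plan is to work directly with the circuit vectors that represent the derived matroid. Recall that for any $\mathbb{F}$-representation $Q$ of $M$, each circuit $C\in\mathcal{C}$ corresponds (uniquely up to nonzero scalar) to a vector $v(C)\in Q^\perp$ with $\supp v(C)=C$, and independence in $\delta M(Q)$ is by definition linear independence of these vectors. So the statement reduces to showing that $\{v(C_1),\dots,v(C_m)\}$ is $\mathbb{F}$-linearly independent, regardless of the representation.

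For each $i$, the hypothesis supplies an element
\[
e_i \in C_i\setminus \bigcup_{j\neq i} C_j.
\]
The key observation is that among the vectors $v(C_1),\dots,v(C_m)$, only $v(C_i)$ has a nonzero entry in coordinate $e_i$: for $j\neq i$, $e_i\notin C_j=\supp v(C_j)$, while $e_i\in C_i=\supp v(C_i)$, so $v(C_i)_{e_i}\neq 0$. Suppose now that
\[
\sum_{j=1}^{m}\alpha_j\, v(C_j)=0
\]
for some $\alpha_j\in\mathbb{F}$. Reading off coordinate $e_i$ gives $\alpha_i\, v(C_i)_{e_i}=0$, and since $v(C_i)_{e_i}\neq 0$ we conclude $\alpha_i=0$. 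Doing this for every $i$ forces all coefficients to vanish, so the circuit vectors are linearly independent in $Q^\perp$, and hence $\mathcal{S}$ is independent in $\delta M(Q)$. Because the argument only used the supports of the circuit vectors, which are determined by $M$ and not by the particular $\mathbb{F}$-representation $Q$, the conclusion holds for every representation.

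There is really no main obstacle here: the hypothesis was tailored precisely to make the supports of the $v(C_i)$ behave like a ``staircase'' that forces a triangular linear system. The only minor point worth flagging in writing the proof is to justify that each $v(C_i)$ is indeed nonzero on its whole support $C_i$ (otherwise the support would be a proper subset of $C_i$, contradicting minimality of circuits), which is why $v(C_i)_{e_i}\neq 0$ is legitimate.
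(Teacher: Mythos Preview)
Your proof is correct and follows essentially the same approach as the paper: both arguments observe that each $v(C_i)$ is the unique circuit vector with nonzero entries on the set $C_i\setminus\bigcup_{j\neq i}C_j$, and conclude linear independence directly from this. The paper phrases the conclusion as ``$v_i$ cannot be a linear combination of the others'', while you write out the vanishing linear combination and read off coordinates, but the idea is identical.
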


\begin{proof}
Consider the set $\mathcal{V}$ of the circuit vectors corresponding to the elements of $\mathcal{S}$.
Take any $v_i \in \mathcal{V}$. Since 
\[C_i-\bigcup\limits_{C_j \in \mathcal{S}-\{C_i\} }C_j=:\Delta \neq \emptyset,\]
the vector $v_i$ will be the only vector in $\mathcal{V}$ to have non-zero coordinates on $\Delta$. Hence, $v_i$ cannot be a linear combination of $\mathcal{V}-\{v_i\}$.
\end{proof}

The following is a useful specialization of~\cite[Lemma 3]{oxley2019derived}.

\begin{proposition}\label{prop: fundamental circuit condition}
    Let $M$ be an $\mathbb{F}$-representable matroid of rank $t-1$. Let $\mathcal{T}$ contain all fundamental circuits of some basis of $M$. Then $\mathcal{T}$ is  $t$-collusion equivalent for all $\mathbb{F}$-representations $M$.
\end{proposition}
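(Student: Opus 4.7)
My plan is to prove $Q = Q^\mathcal{T}$ for every $\F$-representation $Q$ of $M$ by exhibiting $n-t+1$ observed circuit vectors that form a basis of $Q^\perp$. Once that is established, Proposition~\ref{prop: L is lift} identifies $(Q^\mathcal{T})^\perp$ with the span of these vectors, so $(Q^\mathcal{T})^\perp = Q^\perp$ and hence $Q = Q^\mathcal{T}$.

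The key observation is that the fundamental circuits $\{C_e : e \in [n] - B\}$ of the basis $B$ satisfy the hypothesis of Lemma~\ref{lemma: outer conditions on circuit independence}. Indeed, each fundamental circuit $C_e$ sits inside $B \cup \{e\}$, so $e$ belongs to $C_e$ but to no other $C_{e'}$ for $e' \neq e$ in $[n] - B$; in other words, $e$ is a private element of $C_e$ in the family. Therefore the family $\{C_e : e \in [n] - B\}$ is independent in the derived matroid $\delta M(Q)$ of every representation.

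Next I would run a dimension count: the basis $B$ has $t-1$ elements, so $|[n] - B| = n - t + 1$, which matches the rank of $\delta M(Q)$ recorded in Remark~\ref{remark: cardinality of basis of derived matroid}. Hence the independent family of fundamental circuits is in fact a basis of $\delta M(Q)$. By the definition of the derived matroid, the corresponding circuit vectors form a basis of the span of all circuit vectors, which is all of $Q^\perp$. Since, by hypothesis, all fundamental circuits $C_e$ lie in $\mathcal{T}$, these basis vectors are observed, so Proposition~\ref{prop: L is lift} gives $(Q^\mathcal{T})^\perp = Q^\perp$ and the proposition follows.

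I do not anticipate a real obstacle: the combinatorial heavy lifting is already done by Lemma~\ref{lemma: outer conditions on circuit independence}, and the rank match in Remark~\ref{remark: cardinality of basis of derived matroid} upgrades independence to a basis for free. The only subtlety worth spelling out in the write-up is the convention that "all fundamental circuits of $B$" means the full family of $n-t+1$ circuits $\{C_e : e \in [n] - B\}$; if $\mathcal{T}$ only contained a proper subset of them, the rank count would fail and $(Q^\mathcal{T})^\perp$ could be a strict subspace of $Q^\perp$.
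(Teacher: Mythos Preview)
Your proposal is correct and follows essentially the same route as the paper: observe that each fundamental circuit $C_e\subseteq B\cup\{e\}$ has the private element $e$, so Lemma~\ref{lemma: outer conditions on circuit independence} applies, and then use the cardinality count $n-t+1$ together with Remark~\ref{remark: cardinality of basis of derived matroid} to upgrade independence in $\delta M(Q)$ to a basis. The only cosmetic difference is that you spell out the final step via $(Q^\mathcal{T})^\perp=Q^\perp$ and Proposition~\ref{prop: L is lift}, whereas the paper simply says ``hence $\mathcal{T}$ is $t$-collusion equivalent''.
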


\begin{proof}
Recall that a fundamental circuit $C(e,B)$ of $e \in [n]-B$ is the unique circuit contained in $B\cup e$. This means that $\mathcal{T}$ contains a set satisfying Lemma~\ref{lemma: outer conditions on circuit independence}. Furthermore, the existence of this set is independent of the representation of $M$. This set has cardinality $n-|B|=n-t+1$. By Remark~\ref{remark: cardinality of basis of derived matroid}, this is a basis in every derived matroid $\delta M$ and hence $\mathcal{T}$ is $t$-collusion equivalent.
\end{proof}

\begin{corollary}
Let $S \subseteq [n]$ be a set of cardinality $t-1$. A collusion pattern given by
\[\left\langle \{e_i \cup S : e_i \in [n]-S\}\right \rangle\]
is $t$-collusion equivalent for every $[n,t-1]$ MDS code.
\end{corollary}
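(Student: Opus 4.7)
The plan is to realize this corollary as a direct instance of Proposition~\ref{prop: fundamental circuit condition}. Given an $[n,t-1]$ MDS code $Q$, the strategy is to identify the set $S$ as a basis of $M(Q)$ and then show that the maximal colluding sets $\{e_i\cup S : e_i\in [n]-S\}$ are precisely the fundamental circuits of that basis.

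First I would use the defining property of MDS codes: since $Q$ has dimension $t-1$ and minimum distance $d = n-t+2$, every subset of $[n]$ of cardinality $t-1$ is independent in $M(Q)$, and every subset of cardinality $t$ is dependent. Because $M(Q)$ has rank $t-1$, this means $S$, with $|S|=t-1$, is a basis, and every circuit of $M(Q)$ has cardinality exactly $t$.

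Next, for each $e_i\in [n]-S$, recall that the fundamental circuit $C(e_i,S)$ is the unique circuit contained in $S\cup\{e_i\}$. Since $|S\cup\{e_i\}|=t$ is itself the size of a circuit, and since the whole set must be dependent by the MDS property, we get
\[C(e_i,S)=S\cup\{e_i\}.\]
Thus the collection $\{e_i\cup S : e_i\in [n]-S\}$ is exactly the family of all fundamental circuits of the basis $S$ in $M(Q)$.

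Finally, since the collusion pattern $\mathcal{T}=\langle\{e_i\cup S : e_i\in [n]-S\}\rangle$ contains every fundamental circuit of the basis $S$, Proposition~\ref{prop: fundamental circuit condition} applies immediately and yields $Q^\mathcal{T}=Q$, i.e., $\mathcal{T}$ is $t$-collusion equivalent. I do not anticipate any real obstacle: the only subtlety is checking that circuits of an MDS code have size $k+1$, which is immediate from the minimum distance, and everything else is a direct citation of the preceding proposition.
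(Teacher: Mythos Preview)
Your proposal is correct and follows exactly the reasoning the paper intends: the corollary is stated without proof immediately after Proposition~\ref{prop: fundamental circuit condition}, and the implicit argument is precisely that $S$ is a basis of the uniform matroid $M(Q)$ and that each $S\cup\{e_i\}$ is the fundamental circuit $C(e_i,S)$, so the proposition applies directly.
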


The standard construction of a parity check matrix of a code that involves the transposition of the parity check elements is an example of the bases described in Proposition~\ref{prop: fundamental circuit condition}.
Theorem~\ref{theorem: triangular matrix conditions for basis} describes when a $t$-collusion equivalent pattern corresponds to the support of a triangular matrix. We provide examples of such collusion patterns on the ground set of $6$ element in Figure~\ref{fig:table-staircases}.

\begin{definition}
An ordered family $C_1, C_2, \cdots, C_k$ of sets is {\em triangular} if there exist elements $e_1, e_2, \cdots, e_{k}$ such that  $e_i \in C_i$ and
\[\{e_1,e_2,\cdots,e_{i-1}\}\cap C_i = \emptyset \quad \forall i >1.\]
\end{definition}

\begin{definition}
Let $C_1,\dots , C_k$ and $D$ be sets such that $D\subseteq \cup_{i=1}^k C_i$.
An element $e\in \cup_{i=1}^k C_i$ is called \emph{hanging} with respect to the pair $(D,\{C_1,\dots , C_k\})$ if $e\notin D$ and $e$ is contained in exactly one $C_{i}$. A collusion pattern $\mathcal{T}$ is called hanging if for every collection of generators $C_1,\dots , C_k, D$ with $D\subseteq \cup_{i=1}^k C_i$, there is a hanging element with respect to the pair $(D,\{C_1,\dots , C_k\})$.
\end{definition}

\begin{theorem}[Patterns defining a triangular matrix]\label{theorem: triangular matrix conditions for basis}
Let $M$ be an $\mathbb{F}$-representable matroid of rank $t-1$. Let $\mathcal{T}$ be a collusion pattern generated by $n-t+1$ circuits of $M$. Then the following characterisations of the generating set are equivalent and such $\mathcal{T}$ is $t$-collusion equivalent in every $\mathbb{F}$-representation of $M$.
\begin{enumerate}
\item \label{itm:triangular}There is an ordering $C_1, C_2, \cdots, C_{n-t+1}$ of the generators of $\T$ that is triangular.

\item \label{itm:hanging}
The  collusion pattern $\T$ is hanging. 
\end{enumerate}

\end{theorem}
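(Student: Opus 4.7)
The plan is to prove three implications: (\ref{itm:triangular}) implies $t$-collusion equivalence, (\ref{itm:triangular})$\Rightarrow$(\ref{itm:hanging}), and (\ref{itm:hanging})$\Rightarrow$(\ref{itm:triangular}). By Remark~\ref{remark: cardinality of basis of derived matroid}, the derived matroid has rank $n-t+1$, so the $n-t+1$ generating circuits achieve $t$-collusion equivalence exactly when their circuit vectors are linearly independent in $Q^\perp$.

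For (\ref{itm:triangular}) $\Rightarrow$ $t$-equivalence, fix a triangular ordering $C_1, \ldots, C_{n-t+1}$ with witnesses $e_1, \ldots, e_{n-t+1}$. Triangularity rephrases as: $e_j \in C_i$ implies $j \geq i$, so the circuit vector $v_i$ of $C_i$ satisfies $(v_i)_{e_j} = 0$ whenever $i > j$, while $(v_j)_{e_j} \neq 0$. Reading a hypothetical dependence $\sum_i \alpha_i v_i = 0$ at $e_1, e_2, \ldots$ in succession forces $\alpha_1 = \alpha_2 = \cdots = 0$, so the $v_i$ form a basis of $Q^\perp$ composed of observed circuit vectors, giving $(Q^\mathcal{T})^\perp = Q^\perp$ and hence $Q = Q^\mathcal{T}$.

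For (\ref{itm:triangular}) $\Rightarrow$ (\ref{itm:hanging}), take a subcollection $\{C_{i_1}, \ldots, C_{i_k}, D = C_m\}$ of distinct generators with $D \subseteq \bigcup_j C_{i_j}$ and set $M := \max_j i_j$. I claim $M > m$: if not, then all $i_j < m$ (strict by distinctness), and triangularity gives $e_m \notin C_{i_j}$ for every $j$, contradicting $e_m \in D \subseteq \bigcup_j C_{i_j}$. Taking $\ell$ with $i_\ell = M$, the witness $e_M$ lies in $C_{i_\ell}$, is absent from every other $C_{i_j}$ (since $i_j < M$), and is absent from $D$ (since $m < M$); so $e_M$ is the required hanging element.

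The converse (\ref{itm:hanging}) $\Rightarrow$ (\ref{itm:triangular}) is the most delicate, and I handle it by induction on $|\mathcal{G}|$, the number of generators. The crucial lemma is that some $C^\star \in \mathcal{G}$ must contain an element $e^\star$ lying in no other member of $\mathcal{G}$: if this failed, every $C \in \mathcal{G}$ would satisfy $C \subseteq \bigcup_{C' \neq C} C'$, and the hanging hypothesis applied to $D := C$ with subcollection $\mathcal{G} \setminus \{C\}$ would yield an element in exactly one member of $\mathcal{G} \setminus \{C\}$ and outside $D$, hence in exactly one element of $\mathcal{G}$---a contradiction. Setting $C_1 := C^\star$ and $e_1 := e^\star$, and observing that $\mathcal{G} \setminus \{C^\star\}$ trivially inherits the hanging property (every relevant subcollection there is also a subcollection in $\mathcal{G}$), the induction hypothesis produces a triangular ordering $C_2, \ldots, C_{n-t+1}$ of the remainder, which prepended with $C_1$ gives a triangular ordering of $\mathcal{G}$ because $e^\star \notin C_j$ for $j \geq 2$ by construction. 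The principal obstacle is producing this ``peelable'' generator at every inductive stage; once that claim is in hand, both the inheritance of the hanging property and the extension of the witness list are immediate.
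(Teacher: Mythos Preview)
Your overall architecture matches the paper's: you prove (\ref{itm:triangular}) $\Rightarrow$ $t$-equivalence via a triangular system, then the equivalence (\ref{itm:triangular}) $\Leftrightarrow$ (\ref{itm:hanging}) by peeling off a generator with a private element. The argument for (\ref{itm:triangular}) $\Rightarrow$ $t$-equivalence and for (\ref{itm:hanging}) $\Rightarrow$ (\ref{itm:triangular}) are correct and essentially the paper's own.

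However, your proof of (\ref{itm:triangular}) $\Rightarrow$ (\ref{itm:hanging}) has a genuine error: you take $M=\max_j i_j$ where you should take the \emph{minimum}. You correctly record that triangularity says ``$e_j\in C_i \Rightarrow j\ge i$'', but then you invoke it backwards. From $i_j<m$ you cannot conclude $e_m\notin C_{i_j}$: the implication $e_m\in C_{i_j}\Rightarrow m\ge i_j$ is perfectly compatible with $i_j<m$. A concrete counterexample: take $C_1=\{a,e_1,e_2,e_3\}$, $C_2=\{a,e_2\}$, $C_3=\{a,e_3\}$ with witnesses $e_1,e_2,e_3$; this is triangular, yet $D=C_2$ is covered by $\{C_1\}$ alone (so $M=1<2=m$, and your claim $M>m$ already fails), and even covering by $\{C_1,C_3\}$ gives $e_M=e_3\in C_1\cap C_3$, so $e_M$ is not hanging.

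The correct move---and the one the paper uses---is to set $L=\min_j i_j$. Since $e_m\in D\subseteq\bigcup_j C_{i_j}$, some $C_{i_j}$ contains $e_m$, forcing $m\ge i_j\ge L$; distinctness then gives $L<m$. Now triangularity (in the right direction) yields $e_L\notin C_m=D$ and $e_L\notin C_{i_j}$ for every $i_j>L$, so $e_L$ is the desired hanging element. With this single swap of $\max$ for $\min$, your proof goes through.
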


\begin{figure}[!ht]
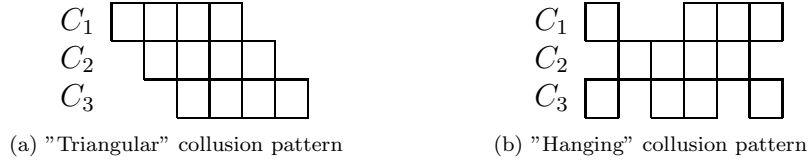

\centering
 \begin{subfigure}[t]{.45\linewidth}
 \centering
    \begin{tabular}{lll|l|l|ll}
\cline{2-5}
\multicolumn{1}{l|}{$C_1$} & \multicolumn{1}{l|}{} &  &  &  & &  \\ \cline{2-6}
$C_2$                      & \multicolumn{1}{l|}{} &  &  &  & \multicolumn{1}{l|}{} &                       \\ \cline{3-7} 
$C_3$                      &                       &  &  &  & \multicolumn{1}{l|}{} & \multicolumn{1}{l|}{} \\ \cline{4-7} 
\end{tabular}
    \caption{"Triangular" collusion pattern}
  \end{subfigure}
  \begin{subfigure}[t]{.45\linewidth}
  \centering
\begin{tabular}{ll|ll|l|l|l}
\cline{2-2} \cline{5-7}
\multicolumn{1}{l|}{$C_1$} &  &                       &  &  &  & \multicolumn{1}{l|}{} \\ \cline{2-7} 
$C_2$                    &  & \multicolumn{1}{l|}{} &  &  &  &                       \\ \cline{2-7} 
\multicolumn{1}{l|}{$C_3$} &  & \multicolumn{1}{l|}{} &  &  &  & \multicolumn{1}{l|}{} \\ \cline{2-2} \cline{4-5} \cline{7-7} 
\end{tabular}
\caption{"Hanging" collusion pattern}
  \end{subfigure}
\vspace{5 pt}
\caption{\label{fig:table-staircases}Examples of $t$-collusion equivalent patterns on $6$ elements.}
\end{figure}

\begin{proof}
\begin{itemize}
\item 1$\implies$ $t$-equivalent : If $C_1, C_2, \cdots, C_{n-t+1}$ is a triangular sequence of generators of $\mathcal{T}$, then the corresponding circuit vectors form a triangular matrix of rank $n-t-1$, and so they generate the dual matroid of $M$. Thus, every linear condition on any representation of $M$ is observed by $\mathcal{T}$, so $\mathcal{T}$ is $t$-collusion equivalent.

\item 1 $\implies$ 2 : If,  for a circuit $C_i$, there exists a collection 
 \[\{C_{i_1},C_{i_2},\cdots,C_{i_m}\}\] 
 such that
\[C_i\subseteq \bigcup_{j=1}^m C_{i_j},\]
then $i_j <i$ for all $j$. Then $C_{\min i_j}$ contains an element $e_{\min i_j}$ that is not in $C_i$ or any other $C_{i_j}$.
\item 2 $\implies$ 1 : We denote the set of generators of $\mathcal{T}$ by 
$\mathcal{S}$, and want to show that there is a triangular ordering of $\mathcal{S}$. 
By the hanging property, for any subcollection $\mathcal{S}'\subseteq\mathcal{S}$, there is at least one set in $\mathcal{S}'$ that is not covered by the others. Letting $\mathcal{S}_0= \mathcal{S}$, we can thus inductively select $C_{i+1}\in \mathcal{S}_i$ as a set that is not covered by the other sets in $\mathcal{S}_i$, and let $\mathcal{S}_{i+1}=\mathcal{S}_i -\{C_{i+1}\}$. By construction, for every $i=1...n-t+1$ there is an element $e_{i+1}\in C_{i+1}$ that is not contained in any of the sets in $\mathcal{S}_{i+1}$, so \[\{e_1,e_2,\cdots,e_{i-1}\}\cap C_i = \emptyset \quad \forall i >1.\] Thus, this is a triangular ordering of the generators in $\mathcal{T}$.
\end{itemize}
\end{proof}

Even though the two conditions in Theorem~\ref{theorem: triangular matrix conditions for basis} are equivalent, the second one is more useful in practice because it gives a direct way to test an arbitrary collusion pattern in at most $n-t+1$ steps as opposed to considering $n!$ possible permutations of the ground set. 

Moreover, Theorem~\ref{theorem: triangular matrix conditions for basis} shows that in Example~\ref{motivating example} it is sufficient to consider only $n-t+1$ of the $n$ cyclically ordered circuits.

\begin{question}
Is there an $\mathbb{F}$-representable matroid $M$ of rank $t-1$ with a set of circuits $\mathcal{S}=\{C_1,C_2,\cdots,C_{n-t+1}\}$ that does not satisfy the characterisation from Theorem~\ref{theorem: triangular matrix conditions for basis} but is a basis in every derived matroid of $M$?
\end{question}

\begin{question}
What is the minimum number $N$ of colluding circuits that guarantee that $Q=Q^\mathcal{T}$? In general, 
\[N=1+\max\{|\mathcal{H}| : \mathcal{H} - \text{hyperplane in } \delta M(Q) \}.\]
On the other hand, it has been shown that the generalized Hamming weight can be used to characterize the minimum size of a message that guarantees that the eavesdropper is able to recover the secret~\cite[Appendix]{wei1991generalized} and \cite[Section 3]{britz2007higher}. Is it possible to characterize the derived matroid of a code using its generalized Hamming distance? 
\end{question}

We will close by considering one more situation. Assume that one element $e$ of the ground set is compromised, that is, all circuits containing $e$ are observed. 

\begin{theorem}\label{theorem: compromised coordinate}
Let $M$ be a connected $\mathbb{F}$-representable matroid of rank $t-1$ on $[n]$. Fix some $e \in [n]$. Let the collusion pattern $\mathcal{T}$ be given by
\[\mathcal{T}=\langle C: e \in C\rangle.\]
Then $\mathcal{T}$ is $t$-collusion equivalent for all representations of $M$.
\end{theorem}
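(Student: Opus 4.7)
The plan is, via Lemma~\ref{lemma: lift given by colluding circuits}, to show that the circuit vectors $\{v(C): C\in\mathcal{C},\, e\in C\}$ span $Q^\perp$ for every $\mathbb{F}$-representation $Q$ of $M$; denote their span by $W$. I will induct on $n$.

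For the base case $n=1$, a connected matroid on one element is either $U_{0,1}$, whose single circuit $\{e\}$ produces a vector spanning the one-dimensional $Q^\perp$, or $U_{1,1}$, for which $Q^\perp=\{0\}$. Either way, the claim is immediate.

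For the inductive step $n\geq 2$, I pick any $f\in[n]\setminus\{e\}$ and invoke Tutte's theorem on minors of connected matroids, which guarantees that at least one of $M\setminus f$ and $M/f$ is connected. If $M\setminus f$ is connected, applying the induction hypothesis to it at $e$ (using the representation $Q|_{[n]\setminus f}$) shows that the $M$-circuit vectors through $e$ avoiding $f$ span the subspace $V_0^f:=\{v\in Q^\perp: v_f=0\}$, so $V_0^f\subseteq W$. Since $f$ is not a coloop of $M$, this subspace has codimension $1$ in $Q^\perp$; and connectedness of $M$ supplies a circuit $C^*$ containing both $e$ and $f$, whose circuit vector lies in $W\setminus V_0^f$, forcing $W=Q^\perp$. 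If instead $M/f$ is connected, then the representation $Q':=\{y\in Q: y_f=0\}|_{[n]\setminus f}$ of $M/f$ has dual equal to the projection of $Q^\perp$ onto $[n]\setminus f$, and its circuit vectors through $e$ are exactly the restrictions $v(C)|_{[n]\setminus f}$ for $C\in\mathcal{C}$ with $e\in C$. By induction these restrictions span the projected dual, so for any $v\in Q^\perp$ there exist circuits $C_i\in\mathcal{C}$ through $e$ and scalars $\alpha_i$ such that $v-\sum_i\alpha_i v(C_i)\in Q^\perp$ is supported only on $\{f\}$. But $M$ is loopless (being connected with $n\geq 2$), so no nonzero vector of $Q^\perp$ can be supported only on $\{f\}$; the residual must vanish, and $v\in W$.

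The main obstacle is the invocation of Tutte's theorem on connectivity of minors; everything else amounts to routine bookkeeping with the standard descriptions of $\mathcal{C}(M\setminus f)$ and $\mathcal{C}(M/f)$ in terms of $\mathcal{C}(M)$, together with the observation that a connected matroid with $n\geq 2$ has neither loops nor coloops, which one must check in order to rule out degenerate identifications of circuits between $M$ and its chosen minor.
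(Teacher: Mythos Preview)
Your argument is correct and takes a genuinely different route from the paper. The paper works at the level of matroids: it invokes \cite[Theorem 4.3.2]{oxley2006matroid} to show that every circuit of $M$ avoiding $e$ arises from a pair of circuits through $e$, deduces $M(Q^{\mathcal T})=M$, and then uses $Q\subseteq Q^{\mathcal T}$ together with equality of dimensions to conclude. Your proof instead works directly with the dual code and inducts on $n$, using the more elementary connectivity result (for any $f$ in a connected matroid, $M\setminus f$ or $M/f$ is connected) in place of the structural circuit theorem. The trade-off: the paper's proof is non-inductive and purely combinatorial but leans on a fairly specific statement from Oxley; yours is more self-contained linear algebra, and the connectivity-of-minors fact you quote is closer to first principles.

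One small wording issue in your contraction case: the phrase ``its circuit vectors through $e$ are \emph{exactly} the restrictions $v(C)|_{[n]\setminus f}$ for $C\in\mathcal C$ with $e\in C$'' is slightly too strong. Not every such restriction is a circuit vector of $M/f$ (when $C\setminus\{f\}$ strictly contains a circuit of $M/f$, the restriction is merely a dependent dual vector). What you actually need---and what does hold---is the containment in the other direction: every circuit $D$ of $M/f$ with $e\in D$ satisfies $D=C\setminus\{f\}$ for a (unique) circuit $C$ of $M$ with $e\in C$, and then $v(C)|_{[n]\setminus f}$ is the circuit vector of $D$. That suffices for the induction to give that $\{v(C)|_{[n]\setminus f}:e\in C\}$ spans $(Q')^\perp$, and the rest of your argument goes through unchanged.
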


\begin{proof}
Denote the set of generators by $\mathcal{S}$ and let $Q$ be an arbitrary representation of $M$ over $\mathbb{F}$. We claim that $M(Q^\mathcal{T})$ is connected. On the one hand, since every circuit in $M(Q^\mathcal{T})$ is also a circuit in $M$, and $M$ is connected, the matroid $M(Q^\mathcal{T})$ does not contain any loops. On the other hand, assume there exists an element $f\in [n]$ that is a coloop in $M(Q^\mathcal{T})$. Since $M$ is connected, there is a circuit $C_{e,f}$ containing $e$ and $f$. By construction, $C_{e,f} \in \mathcal{S}$, so $C_{e,f}$ is a circuit in $M(Q^\mathcal{T})$ containing $f$.

Since $M(Q^\mathcal{T})$ is connected and contains $\mathcal{S}$, then we can apply~\cite[Theorem 4.3.2]{oxley2006matroid} to see that all circuits of $M(Q^\mathcal{T})$ not containing $e$ are given by
\begin{equation}\label{eq: avoiding circuits}
(C_1\cup C_2)-\bigcap\{C: C \in \mathcal{S}, C \subseteq C_1\cup C_2\},
\end{equation}
where $C_1$ and $C_2$ are two distinct members of $\mathcal{S}$.
However, $M$ is also connected and contains $\mathcal{S}$, so its remaining circuits $\mathcal{C}-\mathcal{S}$ are also given by~(\ref{eq: avoiding circuits}). Therefore,  $M(Q^\mathcal{T})=M$ and, thus, $Q=Q^\mathcal{T}$.
\end{proof}

\noindent \textbf{Acknowledgements.} We thank Camilla Hollanti for advising during this project. We thank Amy Wiebe, Felipe Rinc\'{o}n, Mateusz Michalek and Federico Ardila for helpful discussions about matroids. Kuznetsova was partially supported by the Academy of Finland Grant 323416 and Aalto Free Mover Scholarship.

\bibliographystyle{elsarticle-num} 
\bibliography{references}

\begin{thebibliography}{10}
\expandafter\ifx\csname url\endcsname\relax
  \def\url#1{\texttt{#1}}\fi
\expandafter\ifx\csname urlprefix\endcsname\relax\def\urlprefix{URL }\fi
\expandafter\ifx\csname href\endcsname\relax
  \def\href#1#2{#2} \def\path#1{#1}\fi

\bibitem{TGKFHR}
R.~Tajeddine, O.~W. Gnilke, D.~A. Karpuk, R.~Freij{-}Hollanti, C.~Hollanti,
  S.~E. Rouayheb, Private information retrieval schemes for coded data with
  arbitrary collusion patterns, IEEE International Symposium on Information
  Theory (ISIT) (2017).

\bibitem{chor1998private}
B.~Chor, O.~Goldreich, E.~Kushilevitz, M.~Sudan, Private information retrieval,
  Journal of the ACM (JACM) 45 (1998) 965–981.

\bibitem{ostrovsky2007survey}
R.~Ostrovsky, W.~E. Skeith, A survey of single-database private information
  retrieval: Techniques and applications, in: International Workshop on Public
  Key Cryptography, Springer, 2007, pp. 393--411.

\bibitem{di2000single}
G.~Di~Crescenzo, T.~Malkin, R.~Ostrovsky, Single database private information
  retrieval implies oblivious transfer, in: International Conference on the
  Theory and Applications of Cryptographic Techniques, Springer, 2000, pp.
  122--138.

\bibitem{ishai2005sufficient}
Y.~Ishai, E.~Kushilevitz, R.~Ostrovsky, Sufficient conditions for
  collision-resistant hashing, in: Theory of Cryptography Conference, Springer,
  2005, pp. 445--456.

\bibitem{katz2000efficiency}
J.~Katz, L.~Trevisan, On the efficiency of local decoding procedures for
  error-correcting codes, in: Proceedings of the thirty-second annual ACM
  symposium on Theory of computing, 2000, pp. 80--86.

\bibitem{yekhanin2010locally}
S.~Yekhanin, Locally decodable codes and private information retrieval schemes,
  Springer Science \& Business Media, 2010.

\bibitem{shamir1979share}
A.~Shamir, How to share a secret, Communications of the ACM 22~(11) (1979)
  612--613.

\bibitem{beimel2012share}
A.~Beimel, Y.~Ishai, E.~Kushilevitz, I.~Orlov, Share conversion and private
  information retrieval, in: 2012 IEEE 27th Conference on Computational
  Complexity, IEEE, 2012, pp. 258--268.

\bibitem{petitcolas1999information}
F.~A. Petitcolas, R.~J. Anderson, M.~G. Kuhn, Information hiding-a survey,
  Proceedings of the IEEE 87~(7) (1999) 1062--1078.

\bibitem{barton1997method}
J.~M. Barton, Method and apparatus for embedding authentication information
  within digital data, {US} Patent 5,646,997 (Jul.~8 1997).

\bibitem{tanaka1990embedding}
K.~Tanaka, Y.~Nakamura, K.~Matsui, Embedding secret information into a dithered
  multi-level image, in: IEEE Military Communications Conference, Vol.~1, 1990,
  pp. 216--220.

\bibitem{swanson1997data}
M.~D. Swanson, B.~Zhu, A.~H. Tewfik, Data hiding for video-in-video, in:
  Proceedings of International Conference on Image Processing, Vol.~2, IEEE,
  1997, pp. 676--679.

\bibitem{cox1996secure}
I.~J. Cox, J.~Kilian, T.~Leighton, T.~Shamoon, A secure, robust watermark for
  multimedia, in: International Workshop on Information Hiding, Springer, 1996,
  pp. 185--206.

\bibitem{chen2001quantization}
B.~Chen, G.~W. Wornell, Quantization index modulation: A class of provably good
  methods for digital watermarking and information embedding, IEEE Transactions
  on Information theory 47~(4) (2001) 1423--1443.

\bibitem{cox2007digital}
I.~Cox, M.~Miller, J.~Bloom, J.~Fridrich, T.~Kalker, Digital watermarking and
  steganography, Morgan {K}aufmann, 2007.

\bibitem{beimel2005general}
A.~Beimel, Y.~Ishai, E.~Kushilevitz, General constructions for
  information-theoretic private information retrieval, Journal of Computer and
  System Sciences 71~(2) (2005) 213--247.

\bibitem{SunJafar-conjecture}
H.~Sun, S.~A. Jafar, Private information retrieval from {MDS} coded data with
  colluding servers: Settling a conjecture by freij-hollanti et al, in: 2017
  IEEE International Symposium on Information Theory (ISIT), 2017, pp.
  1893--1897.

\bibitem{oxley2006matroid}
J.~Oxley, Matroid theory, Vol.~3, Oxford University Press, USA, 2006.

\bibitem{oxley2019derived}
J.~Oxley, S.~Wang, Dependencies among dependencies in matroids, Electronic
  Journal of Combinatorics 26~(3) (2019) P3.46.

\bibitem{jurrius2015derived}
R.~Jurrius, R.~Pellikaan, The coset leader and list weight enumerator, in: 11th
  International Conference on Finite Fields and their Applications, IEEE, 2015,
  pp. 229--252.

\bibitem{wei1991generalized}
V.~K. Wei, Generalized hamming weights for linear codes, IEEE Transactions on
  information theory 37~(5) (1991) 1412--1418.

\bibitem{britz2007higher}
T.~Britz, Higher support matroids, Discrete mathematics 307~(17-18) (2007)
  2300--2308.

\end{thebibliography}

\end{document}